\documentclass[a4paper]{article}

\usepackage{silence}
\usepackage[utf8]{inputenc}
\usepackage{amsthm}
\usepackage{amsmath}
\usepackage{amssymb}
\usepackage{caption}
\usepackage{cite}
\usepackage[labelformat=simple]{subcaption}

\usepackage{a4wide}
\usepackage{todonotes}
\usepackage{multicol}

\usepackage{tikz}
\usetikzlibrary{calc, fit, decorations.pathreplacing}
\usepackage{tkz-graph} 
\tikzset{cycle2/.style={very thick, densely dashed}}
\usetikzlibrary{shapes.geometric}

\usepackage{tkz-berge}

\usepackage{changes}

\usepackage{hyperref}
\usepackage{color}

\newtheorem{theorem}{Theorem}

\newtheorem{corollary}[theorem]{Corollary}

\newtheorem{lemma}[theorem]{Lemma}

\theoremstyle{definition}
\newtheorem{definition}[theorem]{Definition}

\newtheorem{example}{Example}
\theoremstyle{remark}   

\newtheorem{observation}{Observation}[theorem]

 \usepackage{enumerate}

\newcommand{\abs}[1]{\left\vert{#1}\right\vert} 

\newcommand{\sep}{\mid}

\usepackage{mathtools}

\DeclareMathOperator{\cc}{cc}
\DeclareMathOperator{\cy}{cy}
\DeclareMathOperator{\mfv}{pf}

\newcommand{\bull}{B}
\newcommand{\AFV}{A}

\usepackage[mathcal]{euscript}
\newcommand{\clause}{\mathcal{C}}
\newcommand{\instance}{F}
\newcommand{\literals}{U}
\newcommand{\assignment}{A}

\newcommand{\auxiliary}{v_{-1}}

\newcommand{\koodiswap}[3]{#1[#2 \leftarrow #3]}

\title{On the Vertices That Belong to All Minimum Identifying Codes}

\author{\textbf{Ville Junnila}, \textbf{Tero Laihonen} and \textbf{Havu Miikonen}\\
Department of Mathematics and Statistics\\
University of Turku, FI-20014 Turku, Finland\\
viljun@utu.fi, terolai@utu.fi and havu.e.miikonen@utu.fi}

\date{}

\begin{document}

\maketitle
\begin{abstract}
Identifying codes in graphs have been widely studied since their introduction by Karpovsky, Chakrabarty and Levitin in 1998. In this paper, we consider the vertices that are in every minimum identifying code in a graph. There are two types of such vertices: \emph{always-forced} vertices that belong to all identifying codes  (minimum or not) and \emph{min-forced} vertices that belong to all minimum identifying codes. A vertex is called \emph{proper-min-forced} if it is min-forced but not always-forced. We show an upper bound $2n/3$ for the number of such proper-min-forced vertices in a closed-twin-free graph of order $n$. Moreover, for integers $n$ divisible by three, we construct an infinite family of graphs in which there are $2n/3-1$ such vertices. In addition, we determine the maximum number of edges in a graph of even order such that the graph contains proper-min-forced vertices. 
We also show that the decision problem of determining whether a given vertex in a graph is proper-min-forced is co-NP-hard.

\smallskip

\noindent\textbf{Keywords:} Minimum identifying code, forced vertex,  dense graph, computational complexity\\
\noindent\textbf{ AMS Subj.\ Class.\ (2020)}: 05C69, 05C42, 68Q15
\end{abstract}

\section{Introduction}

Let $G = (V(G), E(G))$ be a simple, finite and undirected graph.
We often denote the order of a graph $\abs{V(G)}$ by $n$. 
A graph is \emph{nontrivial} if $n\ge 2$.  A subgraph $H$ of $G$ is  \emph{spanning}  if $V(H)=V(G)$.
An edge between vertices $u,v \in V(G)$ is denoted by $\{u,v\} = uv$.
We denote by $N(v)$ the \emph{open neighbourhood} of a vertex $v$ which is defined as $N(v) = \{ u\in V \sep vu\in E(G) \}$. The \emph{closed neighbourhood} of a vertex $v \in V$ is $N[v] = N(v) \cup \{v\}$. If we wish to emphasize the underlying graph, we denote $N_G(v)$ and $N_G[v].$
Distinct vertices $u$ and $v$ are called \emph{closed twins} if $N[u] = N[v]$, and \emph{open twins} if $N(u) = N(v)$. A graph is called \emph{closed-twin-free} if there are no closed twins in it.

A vertex  $u\in V(G)$ is called \emph{universal} if $N_G[u]=V(G)$. We define the \emph{degree} of a vertex $v$ as  $d_G(v)=|N(v)|$, and the \emph{maximum degree} $\Delta(G)=\max\{d_G(v)\mid v\in V(G)\}$. The  \emph{degree sequence} $(d_1,d_2,\dots, d_n)$ is an (ascending) list of degrees of all the vertices. 
A nonempty subset $S \subseteq V(G)$ is called a \emph{code} and its
elements  are called \emph{codewords}.
We use the following shorthand notation for replacing a vertex $u$ of $S$ by another vertex $v$ (which usually does not belong to $S$): $ S[u \leftarrow v]=(S \setminus \{u\}) \cup \{v\}$. 
The \emph{symmetric difference} of sets $A$ and $B$  is denoted  by $A\, \triangle \, B=(A \setminus B)\cup (B \setminus A)$. Since $(A \,\triangle \, B)\, \triangle \, B=A$, it follows that 
\begin{equation}\label{SymmErotCancelointi}
    A \, \triangle \, B = C \, \triangle \, B \Rightarrow  A = C.
\end{equation}
We denote the \emph{complete graph} of order $n$ by $K_n$ and the \emph{path} of order $n$ by $P_n$.
The \emph{complement} of a graph $G$ is denoted by  $\overline{G}$. For a code $S \subseteq V(G)$ and a vertex $v \in V(G)$, we defined the $I$\emph{-set} $I_G(S;v)$ of $v$ as the set of codewords in the closed neighbourhood of $v$, that is, 
\[
I_G(S;v)  = S \cap N_G[v] \textbf{}.
\]
We may omit the graph or the code in the notation if they are clear from the context, that is, $I_G(S;v) = I_G(v) = I(S;v) = I(v)$.

The following concept of \emph{identifying codes} was introduced in~\cite{Karpovsky} in 1998. The topic has been studied widely in the literature, see the numerous articles in the list of~\cite{Lobstein}. For some recent developments on the subject, consult, for example, \cite{Dipayan}, \cite{Aline}, \cite{SukSeo} and\cite{Sampaio}.

\begin{definition}
    A code $S \subseteq V(G)$ is an \emph{identifying code}, or an \emph{ID code}, in a graph $G$ if for all distinct vertices $u,v \in V(G)$ their $I$-sets are nonempty and
    \(I(u) \neq I(v)\).
\end{definition}

The original application for identifying codes
was fault diagnosis in multiprocessor systems~\cite{Karpovsky}. Later, these codes were extended to applications for
environmental monitoring \cite{Berger-Wolf}, and joint monitoring and routing
in wireless sensor networks \cite{Laifenfeld}. Let us look more closely at the idea behind using identifying codes  to locate objects in a sensor network.
Sensor networks are systems consisting of sensors and links between them and, therefore, can be easily modeled as a graph $G$. Say we have a set of sensors $S\subseteq V(G)$. A sensor is placed at some node of a network, that is, at a vertex $c\in S$, and  it monitors its surroundings, that is, the closed neighbourhood $N_G[c]$, reporting on the possible existence of the sought object. Based on these reports, the central unit tries to deduce the location of the object. This can be done if and only if $S$ is an identifying code since each $I(v)$ is nonempty and pairwise distinct for all $v\in V(G)$. A natural goal is to minimize the number of sensors in a network, and, therefore, we are interested in finding identifying codes with \emph{minimum} cardinality. More information on location in sensor networks can be found, for instance, in~\cite{Trachtenberg,Laifenfeld}.

Clearly, if $S\subseteq V(G)$ is an identifying code, then a superset $S'\subseteq V(G)$ of $S$, that is, $S\subseteq S'$, is also an identifying code. In particular, this holds for $S'=V(G)$, and in that case, $I(v)=N[v]$ for all $v \in V(G)$. Therefore, in order to have an ID code in a graph, it must be closed-twin-free.
Closed-twin-freeness is also sufficient: the set $V(G)$ is an ID code, if $G$ is closed-twin-free.  The cardinality of the minimum identifying codes in a closed-twin-free graph $G$ is denoted by $\gamma^{ID}(G)$ and is called the \emph{identifying number} of $G$.

By~\cite{GRAVIER2007432}, it is known that if a finite $G$ is closed-twin-free and $G \neq \overline{K_n}$, there always exists a vertex $v$ such that $V(G)\setminus \{v\}$ is an ID code, and thus, $\gamma^{ID}(G)\le n-1$.

However, there can be vertices  that cannot be omitted from \emph{any} identifying code in a finite graph, as defined next.

\begin{definition} Let $G$ be a closed-twin-free graph.
A vertex $v \in V(G)$ is \emph{always-forced} if it is in every identifying code in $G$. In other words, $v$ belongs to the intersection $\bigcap_{S} S$, where $S$ goes through all identifying codes in $G$. 
\end{definition}

It is easy to see that a vertex $v$ is always-forced if and only if $V\setminus \{v\}$ is not an identifying code in $G.$ Consequently, it is algorithmically easy to check if a vertex is always-forced.
Clearly, isolated vertices in a graph are always-forced.
If  $N_G[u] \, \triangle \, N_G[w] = \{v\}$, then the vertex $v$ must be in every identifying code due to the pair $u,w$. In the literature (see e.g.,\cite{FlorentForced,FlorentGuillem}),  such vertices $v$ are called \emph{forced} or $u,w$-\emph{forced}. Obviously, an isolated vertex is not forced by any pair (although it is always-forced). The definition of always-forced coincides with this concept of forced (by a pair) in closed-twin-free graphs with no isolated vertices. This is seen by the next easy lemma.

\begin{lemma}\label{Connection}
     Let $G$ be a connected, nontrivial and closed-twin-free graph.
    A vertex $v \in V(G)$ is always-forced if and only if there exists a pair of vertices $u, w \in V(G)$ such that $N_G[u] \, \triangle \, N_G[w] = \{v\}$.
\end{lemma}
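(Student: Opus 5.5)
We prove both directions using the observation made just before the lemma: $v$ is always-forced if and only if $S=V(G)\setminus\{v\}$ is not an identifying code. Every identifying code is contained in $V(G)$, and supersets of identifying codes are identifying. So if some identifying code avoids $v$, then $V(G)\setminus\{v\}$ is identifying; the converse is immediate.

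For the ``if'' direction, suppose $N_G[u]\,\triangle\,N_G[w]=\{v\}$ and let $S$ be any identifying code. The sets $I(S;u)=S\cap N[u]$ and $I(S;w)=S\cap N[w]$ differ exactly in $S\cap (N[u]\,\triangle\,N[w]) = S\cap\{v\}$. Since $u\neq w$, these $I$-sets must be distinct, so $v\in S$. Hence $v$ is always-forced.

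For the ``only if'' direction, assume $v$ is always-forced, so $S=V(G)\setminus\{v\}$ is not an identifying code. Then either some $I$-set is empty, or two distinct vertices have equal $I$-sets.

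\emph{Case 1: some $I(S;x)$ is empty.} Then $N[x]\subseteq\{v\}$, so $x=v$ and $v$ is isolated. This contradicts $G$ being connected and nontrivial. This is the one place where these hypotheses are needed, and it is the only real subtlety.

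\emph{Case 2: $I(S;u)=I(S;w)$ for some distinct $u,w$.} Then $N[u]\setminus\{v\}=N[w]\setminus\{v\}$, so $N[u]\,\triangle\,N[w]\subseteq\{v\}$. Closed-twin-freeness gives $N[u]\neq N[w]$, so the symmetric difference is exactly $\{v\}$, as required.
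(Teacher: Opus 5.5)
Your proof is correct and follows the paper's argument essentially verbatim. You reduce always-forcedness to $V(G)\setminus\{v\}$ failing to be an identifying code, rule out an empty $I$-set via connectivity and nontriviality, and use closed-twin-freeness to upgrade $N[u]\,\triangle\,N[w]\subseteq\{v\}$ to equality, while also spelling out the superset argument and the ``if'' direction that the paper calls trivial.
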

\begin{proof}

    ($\Rightarrow$) Assume first that $v$ is always-forced. Hence, $S=V(G)\setminus\{v\}$ is not an ID code. Then necessarily $I(S;x)=\emptyset$ or $I(S;u)=I(S;w)$ for some $x,u,w\in V(G)$, where $u\neq w$. However, since $G$ is connected and nontrivial, we have $I(S;x)\neq \emptyset$ for all $x\in V(G)$. Hence, we must have $I(S;u)=I(S;w)$ and this implies that $N_G[u] \, \triangle \, N_G[w] = \{v\}$ as $G$ is closed-twin-free.

    ($\Leftarrow$) This direction is trivial.
 \end{proof}

Recall that $\gamma^{ID}(G)\le n-1$ and hence, the number of always-forced vertices is also upper bounded by $n-1$. Consider a graph $G$ formed from a complete graph $K_{2k+1}$ by removing edges of a maximal matching. Denoting the universal vertex of $G$, that is, the vertex adjacent to all other vertices of $G$, by $u$, we may easily observe by studying the symmetric differences of the closed neighbourhoods of $u$ and $v \ (\neq u)$ that all the vertices belonging to $V(G) \setminus \{u\}$ are always-forced; the result also follows by Lemma~\ref{lemma:XY}(ii), which is later presented in Section~\ref{DenseSec}. Thus, $G$ attains the previous upper bound $n-1$ on the number of always-forced vertices. 

As discussed above, we are mostly interested in the smallest possible identifying codes, the minimum ID codes, and the vertices in such a code. With that in mind we give the following definition.

\begin{definition} Let $G$ be a closed-twin-free graph.
A vertex $v \in V(G)$ is \emph{minimum-forced} or \emph{min-forced} if it is in every minimum identifying code in $G$. In other words, $v$ belongs to the intersection $\bigcap_{S} S$, where $S$ goes through all minimum identifying codes in $G$. 
\end{definition}

Questions related to these vertices are rather natural and important when we try to build minimum identifying codes; indeed, such vertices must always be chosen in a minimum ID code. 
Similar questions have also been studied concerning other special subsets of graphs; for example, (total) dominating sets~\cite{Bouquet2021,Cockayne2003,Mynhardt1999}, locating-dominating sets~\cite{Blidia2009minforcedtrees,Junnila2026LD}, independent (or stable) sets \cite{max-stable-sets1999,Hammer1982}, matchings \cite{unique-perfect-matching2018} and resolving sets~\cite{Bagheri2016,Buczkowski2003,Hakanen2022metric, Hakanen2026edge}. Furthermore, the graphs for which each vertex belongs to some optimal (smallest or largest depending on the problem) special subset are called \emph{excellent} and have been studied, for example, in the case of dominating sets~\cite{Fricke2002,Samodivkin2021} and independent sets~\cite{Dettlaff2023}.

Clearly, always-forced vertices are in every ID code, so trivially, they are also in every minimum ID code. 
As explained earlier, determining whether a vertex is always-forced is easy.
Hence, the main focus of the paper is on the vertices that are min-forced but not always-forced. 
In the following definition, we call such vertices proper-minimum-forced or proper-min-forced.

\begin{definition} Let $G$ be a closed-twin-free graph.
A \emph{proper-min(imum)-forced vertex} is a min-forced vertex $v \in V(G)$ that is not always-forced.
\end{definition}

Recall that the maximum number of always-forced vertices in a graph of order $n$ is $n-1$ and that the upper bound can also be attained. In Section~\ref{2n3bound}, we show that the situation is very different for proper-min-forced vertices. Indeed, we show in Theorem~\ref{theo:bound} that there can only be at most $2n/3$ proper-min-forced vertices. In Section~\ref{DenseSec},  we give an upper bound on the  maximum number of edges in a graph of even order such that the graph contains proper-min-forced vertices. In addition, we show that the bound is tight. 
The question of a vertex being always-forced is easy to check algorithmically, but the situation is more complicated for the proper-min-forced vertices. Indeed, it is shown in Section~\ref{CompSec} that deciding whether a vertex is proper-min-forced is co-NP-hard.

\section{On the maximum number of proper-min-forced vertices}\label{2n3bound}

 Next we define a useful concept of a graph which has coloured edges. It helps us give an upper bound $2n/3$ on the number of proper-min-forced vertices (see Theorem~\ref{theo:bound}). In Example~\ref{shovelgraph}, we build an infinite family of graphs containing $2n/3-1$ proper-min-forced vertices when $n$ is divisible by three.

\begin{definition} \label{IDColourGraph}
    Let $G$ be a connected, nontrivial and closed-twin-free graph as well as let $S \subseteq V(G)$ be an identifying code in $G$.
    We define the \emph{colour graph} $G_S$ as follows: $V(G_S) = V(G) \cup \{\auxiliary\}$ and 
    \begin{align*}
        E(G_S) =& \left\{xy \sep x, y \in V(G),\ x \neq y \text{ and } I_G(S; x) \, \triangle \, I_G(S; y) = \{u\} \text{, where } u \in S \right\} \\
        &\cup \left\{x\auxiliary \sep x \in V(G) \text{ and } I_G(S; x)  = \{u\} \text{, where } u \in S \right\}.
    \end{align*}
    We call vertices in $V(G)$ \emph{true vertices} and the vertex $\auxiliary$ an \emph{auxiliary vertex}.
    Associating a colour with each codeword $u \in S$, we assign the colour $u$ to the edge $xy \in E(G_S)$ if $I_G(S; x) \, \triangle \, I_G(S; y) = \{u\}$, and similarly,  to $x \auxiliary$ if $I_G(S; x)  = \{u\}$.
\end{definition}
Notice that the colour assigned to an edge this way in $G_S$ is unique and every edge gets a colour. An example of a colour graph can be found in Figure~\ref{fig:example}.
A similar concept of a colour graph has been studied in \cite{hernando2018locating, Junnila2026LD} for locating-dominating codes and in \cite{Hakanen2022metric, Hakanen2025tight-bound} for resolving sets. However, because the definitions of identifying codes, locating-dominating codes and resolving sets differ, there are significant differences between the colour graph of Definition~\ref{IDColourGraph} and the colour graphs related to locating-dominating codes and resolving sets. 
In particular, the concept of always-forced vertices is unique for identifying codes. Moreover, the upper bound obtained for identifying codes ($\approx 2n/3$) is significantly larger than the upper bounds ($2n/5$) in the cases of locating-dominating codes and resolving sets. Next, we provide some properties of the graph $G_S$ defined above, which share some similarities with locating-dominating codes \cite{hernando2018locating, Junnila2026LD} and resolving sets \cite{Hakanen2022metric, Hakanen2025tight-bound}.

\begin{lemma}
\label{lemma:G_S-properties}
Let $G$ be a connected, nontrivial and closed-twin-free graph as well as let $S$ be an identifying code in $G$.
    \begin{enumerate}[(i)]
    
        \item Let there be an edge $xy \in E(G_S)$ with colour $u$.
        If $x$ and $y$ are true vertices, then $u \in N_G[x] \, \triangle \, N_G[y]$.
        If $y = \auxiliary$ (resp. $x = \auxiliary$), then $u \in N_G[x]$ (resp. $u \in N_G[y]$).

        \item Any incident edges $xy$ and $yz$ of $G_S$ have distinct colours.
    
        \item If $S$ is a \emph{minimal} identifying code, then there is at least one edge in $G_S$ with  colour $u$ for each codeword $u \in S$.

        \item For each always-forced vertex $v$, there is an edge $uw \in E(G_S)$ with  colour $v$ such that $N_G[u] \, \triangle \, N_G[w] = \{v\}$ (regardless of whether $S$ is minimal or not).
        
        \item Every cycle (if any) in $G_S$ has an even number of edges with  colour $u$ for each $u$ appearing on the cycle. Moreover, the graph $G_S$ is bipartite.
        
        \item Let $W = w_1 w_2 \cdots w_k$ be a walk with no repeated edges in $G_S$.
        If each colour appearing on the edges of the walk $W$ occurs an even number of times, then $W$ is a closed walk, that is, $w_1 = w_k$.

    \end{enumerate} 
\end{lemma}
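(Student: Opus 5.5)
The unifying idea is to encode each vertex of $G_S$ by a vector over $\mathbb{F}_2$: put $I(x)=I_G(S;x)$ for true vertices and $I(\auxiliary)=\emptyset$. Then every edge $xy$ of $G_S$, including those at $\auxiliary$, is exactly a pair with $I(x)\,\triangle\, I(y)=\{u\}$, where $u$ is the colour of the edge. Since $S$ is identifying, every $I(x)$ with $x$ true is nonempty and the sets $I(x)$ are pairwise distinct. So the map $x\mapsto I(x)$ is injective on $V(G_S)$, and $G_S$ is the subgraph of the hypercube on subsets of $S$ induced by the image, with colours equal to coordinate directions. Most of the items follow from this viewpoint.

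For (i), take true $x,y$ with $I(x)\,\triangle\, I(y)=\{u\}$. Then $u$ lies in exactly one of $N[x]\cap S$ and $N[y]\cap S$, hence in $N[x]\,\triangle\, N[y]$. If $y=\auxiliary$, then $I(x)=\{u\}\subseteq N[x]$. For (ii), suppose $xy$ and $yz$ both have colour $u$. Then $I(x)=I(y)\,\triangle\,\{u\}=I(z)$, so $x=z$ by injectivity (using \eqref{SymmErotCancelointi}), contradicting that these are two distinct edges. For (v), along any cycle the symmetric difference of the edge colours telescopes to $I(w_1)\,\triangle\, I(w_1)=\emptyset$, so each colour occurs an even number of times. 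Summing these parities shows every cycle has even length, hence $G_S$ is bipartite; equivalently, $x\mapsto |I(x)| \bmod 2$ is a proper $2$-colouring. For (vi), telescoping along the walk gives $I(w_1)\,\triangle\, I(w_k)$ equal to the symmetric difference of the colours, which is $\emptyset$ by the even-occurrence assumption. Hence $I(w_1)=I(w_k)$ and $w_1=w_k$ by injectivity.

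For (iv), let $v$ be always-forced. By Lemma~\ref{Connection} there are $u,w$ with $N[u]\,\triangle\, N[w]=\{v\}$, and $v\in S$ because $S$ is identifying. Intersecting with $S$ gives $I(u)\,\triangle\, I(w)=\{v\}$, so $uw$ is an edge of colour $v$. For (iii), take $u\in S$ and use minimality: $S'=S\setminus\{u\}$ is not an identifying code. First, if $S'=\emptyset$ then $S=\{u\}$, and since $G$ is nontrivial some $x\ne u$ has $I(S;x)=\{u\}$ (any neighbour of $u$), giving the edge $x\auxiliary$; actually the general argument below covers this case too. In general, either some $x$ has $I(S';x)=\emptyset$, or some distinct $x,y$ have $I(S';x)=I(S';y)$. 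In the first case $I(S;x)\ne\emptyset$ forces $I(S;x)=\{u\}$, giving an edge $x\auxiliary$ of colour $u$. In the second case $I(S;x)\ne I(S;y)$ but they agree off $u$, so their symmetric difference is $\{u\}$, giving an edge $xy$ of colour $u$.

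The only step needing care is (iii): I must make sure the emptiness case is correctly translated into an edge to $\auxiliary$ rather than being overlooked. Injectivity, including distinguishing $\auxiliary$ from the true vertices via the nonempty $I$-sets, is the key fact used throughout, so I would establish it first.
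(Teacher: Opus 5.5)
Your proof is correct. Item by item it rests on the same facts as the paper's proof: symmetric differences of $I$-sets, telescoping along cycles and walks in (v) and (vi), the contrapositive of minimality in (iii) (the paper instead shows by contradiction that $S\setminus\{u\}$ would be an identifying code), and Lemma~\ref{Connection} in (iv).

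What you do differently is route everything through the convention $I(\auxiliary)=\emptyset$ and the fact that $x\mapsto I(x)$ is injective on $V(G_S)$. This makes $G_S$ an induced subgraph of the hypercube on subsets of $S$, with colours as coordinate directions. The gain is a uniform treatment of the auxiliary vertex:
\begin{itemize}
\item In (ii), the paper needs a three-case analysis, while you need one line.
\item In (v) and (vi), the paper tacitly treats $\auxiliary$ as a vertex with empty $I$-set. For example, when it concludes $w_1=w_k$ from $I(S;w_1)=I(S;w_k)$, one endpoint may be $\auxiliary$, which requires the domination property. Your injectivity statement makes this explicit.
\end{itemize}
Your bipartiteness argument is also more elementary. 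Since $\abs{I(x)}$ changes by exactly one along every edge, $\abs{I(x)} \bmod 2$ is a proper $2$-colouring. The paper instead derives bipartiteness from the even-cycle property via K\H{o}nig's theorem. You also make explicit in (iv) that $v\in S$, which the paper uses silently when passing from $N_G[u]\,\triangle\,N_G[w]=\{v\}$ to $I(S;u)\,\triangle\,I(S;w)=\{v\}$.
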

\begin{proof}
(i) If $x$ and $y$ are true vertices, we obtain by the definition of the colour graph, that $I_G(S; x) \, \triangle \, I_G(S; y) = \{u\}$. The claim $u \in N_G[x] \, \triangle \, N_G[y]$ follows from this directly. If $y=\auxiliary$ (resp. $x=\auxiliary$), then $I_G(S;x)=\{u\}$ which implies that $u\in N_G[x]$ (resp. $u\in N_G[y])$.

(ii) Suppose to the contrary that there are edges $xy$ and $yz$ with  colour $u$.
        Let us first assume that $x$, $y$ and $z$ are all true vertices. Now, by definition,  $I_G(S; x) \, \triangle \, I_G(S; y) = \{u\}$ and  $I_G(S; z) \, \triangle \, I_G(S; y) = \{u\}$. This implies by \eqref{SymmErotCancelointi} that $I_G(S; x) = I_G(S; z)$, contradicting the fact that $S$ is an identifying code.

        If $y = \auxiliary$, then the edges $xy$ and $yz$ with colour $u$ imply that $I_G(S;x) = \{u\} = I_G(S;z)$, which contradicts the identifying property of $S$.

        Finally, if $x = \auxiliary$ (or $z = \auxiliary$), the $u$-coloured edge $xy$ implies that $I_G(S;y) = \{u\}$. Now since $I_G(S; y) \, \triangle \, I_G(S; z) = \{u\}$, it must be that $I_G(S; z) = \emptyset$, contradicting the dominating property of $S$.
        
(iii) Suppose that there is a codeword $u$ such that there are no edges in $E(G_S)$ with  colour associated with $u$.
        Let us now look at the code $S \setminus \{u\}$.
        The $I$-sets of all vertices with respect to $S \setminus \{u\}$ are nonempty, because if there was a vertex $w$ the $I$-set $I(S;w) = \{u\}$ (and consequently, $I(S\setminus \{u\};w) = \emptyset$), there would be a $u$-coloured edge $w\auxiliary$ in $E(G_S)$.

Since $I(S;x) \, \triangle \, I(S;y) \neq \{u\}$ (and $I(S;x) \, \triangle \, I(S;y) \neq \emptyset$) for all pairs of distinct vertices $x, y \in V(G)$, there exists a vertex $a_{xy} \neq u$ such that $a_{xy}\in I(S,x) \, \triangle \, I(S;y)$. It follows that $I(S \setminus \{u\};x)   \neq I(S \setminus \{u\};y)$ for all distinct $x$ and $y$. Hence, $S\setminus \{u\}$ is an ID code, which is a contradiction to the minimality of $S$.

(iv)   By Lemma~\ref{Connection}, for each always-forced vertex $v$ in a connected and nontrivial graph, there is a pair $u$ and $w$ with $N_G[u] \, \triangle \, N_G[w]=\{v\}$. It follows that for each ID code $S$, we have $I(S;u)\, \triangle \, I(S;w)=\{v\}$ and, thus, by the definition of the colour graph, $uw$ has the colour $v$.

(v) Let $C =  w_1 w_2 \cdots w_k w_1$ be a cycle in $G_S$. 
        Each edge $w_{i} w_{i+1}$ in $C$ corresponds to the removal (or addition) of a codeword $u$ from (or to) $I(S;w_i)$ to get $I(S;w_{i+1})$. 
        Each such removal (addition) has to be undone by adding (removing) the vertex $u$ somewhere along the cycle to get back to $I(S;w_i)$.
        Thus, each colour $u$ appears on the cycle $C$ an even number of times. 
        Due to the well-known result by K\H{o}nig, the graph $G_S$ with no odd cycles is bipartite.
        
        (vi) Each appearance of a colour $u$ along the walk $W =  w_1 w_2 \cdots w_k$ adds or removes the codeword $u$ from the $I$-set of a vertex on the walk.
        Starting from $I(S;w_1)$, the additions and removals of codewords cancel each other out and we get $I(S;w_k) = I(S;w_1)$.
        If $I(S;w_1) = I(S;w_k)$, then necessarily $w_1 = w_k$ as $S$ is an ID code.
        Thus, $W$ is a closed walk.
\end{proof}

 Let us consider the graph $G$ illustrated in  Figure~\ref{fig:example}(a) and its minimum ID code $S=\{1,2,3,6,7\}$. The vertices 6 and 7 are always-forced (since $N[4] \, \triangle \, N[8] = \{6\}$ and $N[1] \, \triangle \, N[5] = \{7\}$). By Lemma~\ref{lemma:G_S-properties}(iv) there must be at least one edge with colour 6 and 7 in the colour graph. Indeed, in the colour graph of Figure~\ref{fig:example}(b), there is  one 6-coloured edge (between vertices 4 and 8) and
the vertex 7 has two edges with its colour (between 1 and 5 and also between 4 and the auxiliary vertex $\auxiliary$). Figure~\ref{fig:example}(b) also illustrates the facts that all incident edges have different colours and each colour appearing in a cycle occurs an even number of times.

\begin{figure}[h] 
     \centering
     \begin{subfigure}[t]{0.45\textwidth}
        \centering
        \begin{tikzpicture}[yscale=0.6, xscale=0.6, rotate=0]

    \renewcommand*{\EdgeLineWidth}{ 0.5pt}
    \SetVertexMath
    \SetVertexLabelOut

    {
    \SetVertexNoLabel
    \tikzstyle{VertexStyle}=[minimum size=1pt,inner sep=0pt]

    \Vertex[x=6, y=2]{1}
    \Vertex[x=3.5, y=4]{2}
    \Vertex[x=0.3, y=2]{3}
    \Vertex[x=3.5, y=0]{4}
    \Vertex[x=8, y=1]{5}
    \Vertex[x=2, y=3]{6}
    \Vertex[x=4, y=2]{7}
    \Vertex[x=2, y=1]{8}

    }

    \tikzstyle{EdgeStyle}=[bend right=0]
    \Edge(1)(5)
    \Edge(1)(7)
    \Edge(2)(6)
    \Edge(2)(7)
    \Edge(3)(6)
    \Edge(4)(8)
    \Edge(4)(7)    
    \Edge(6)(7)
    \Edge(6)(8)
    \Edge(7)(8)
    \tikzstyle{EdgeStyle}=[bend right=40, looseness=1.1]

    \tikzstyle{VertexStyle}=[circle,draw, fill=red!30]
    \SO[unit=0, Lpos=0](2){2}
    \SO[unit=0, Lpos=90](3){3}

    \tikzstyle{VertexStyle}=[circle,draw, fill=red!80]
    \SO[unit=0, Lpos=90](1){1}
    
    \tikzstyle{VertexStyle}=[circle,draw, fill=black]
    \SO[unit=0, Lpos=45](7){7}
    \SO[unit=0, Lpos=135](6){6}

    \tikzstyle{VertexStyle}=[circle,draw, fill=white]
    \SO[unit=0, Lpos=180](8){8}
    \SO[unit=0](4){4}
    \SO[unit=0, Lpos=90](5){5}

\end{tikzpicture}
		\caption{}
     \end{subfigure}
     \hfill
     \begin{subfigure}[t]{0.45\textwidth}
        \centering
        
        \begin{tikzpicture}[yscale=0.6, xscale=0.6, rotate=0]

    \renewcommand*{\EdgeLineWidth}{ 0.5pt}
    \SetVertexMath
    \SetVertexLabelOut

    {
    \SetVertexNoLabel
    \tikzstyle{VertexStyle}=[minimum size=1pt,inner sep=0pt]

    \Vertex[x=6, y=2]{1}
    \Vertex[x=3.5, y=4]{2}
    \Vertex[x=0.3, y=2]{3}
    \Vertex[x=3.5, y=0]{4}
    \Vertex[x=8, y=1]{5}
    \Vertex[x=2, y=3]{6}
    \Vertex[x=4, y=2]{7}
    \Vertex[x=2, y=1]{8}

    }

    \begin{scope}
        \tikzstyle{VertexStyle}=[circle,draw, fill=white]
        \SetVertexLabelOut
        \Vertex[x=6, y=0, Lpos=90]{\auxiliary}
    \end{scope}

    \tikzstyle{LabelStyle}=[fill=white, circle,minimum size=12pt,inner sep=1pt,scale=.8]

    \tikzstyle{EdgeStyle}=[black]
    \Edge[label=$1$](1)(4)
    \Edge[label=$1$](5)(\auxiliary)
    \Edge[label=$1$](2)(7)
    \Edge[label=$2$](2)(8)
    \Edge[label=$3$](2)(6)
    \Edge[label=$6$](8)(4)
    \Edge[label=$7$](\auxiliary)(4)
    \Edge[label=$7$](1)(5)

    \tikzstyle{VertexStyle}=[circle,draw, fill=red!30]
    \SO[unit=0, Lpos=0](2){2}
    \SO[unit=0, Lpos=90](3){3}

    \tikzstyle{VertexStyle}=[circle,draw, fill=red!80]
    \SO[unit=0, Lpos=90](1){1}
    
    \tikzstyle{VertexStyle}=[circle,draw, fill=black]
    \SO[unit=0, Lpos=45](7){7}
    \SO[unit=0, Lpos=135](6){6}

    \tikzstyle{VertexStyle}=[circle,draw, fill=white]
    \SO[unit=0, Lpos=180](8){8}
    \SO[unit=0, Lpos=90](4){4}
    \SO[unit=0, Lpos=90](5){5}

\end{tikzpicture}
		\caption{}
     \end{subfigure}

    \caption{(a) A graph $G$ with a minimum ID code $S=\{1,2,3,6,7\}$. (b) The corresponding colour graph $G_S$. The edges are coloured by the labels of the codewords in $S$. Notice that the colour graph does not have to be connected (for example, the vertex $3$ is isolated) although $G$ is connected.}
    \label{fig:example}
\end{figure}

In the next lemma, we give a further property of colours in $G_S$.

\begin{lemma}
\label{lemma:atleast2edges}
Let $G$ be a connected, nontrivial  and closed-twin-free graph and assume that $S$ is an
identifying code in $G$.
Let $v \in S$ be a vertex that is not always-forced such that there is exactly one edge of colour $v$ in the colour graph $G_S$.
Then there exists a vertex $u \notin S$ such that the set $\koodiswap{S}{v}{u}$ is an identifying code in $G$.

\end{lemma}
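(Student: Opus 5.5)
The plan is to remove $v$ from $S$, note that only the single pair of vertices joined by the unique $v$-coloured edge of $G_S$ becomes a problem, and then repair that pair by adding a suitable non-codeword $u$. Let $e$ be the unique edge of colour $v$ in $G_S$, and consider $S'=S\setminus\{v\}$.

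First I check that $S'$ fails only because of $e$. Every vertex $w$ with $I(S';w)=\emptyset$ satisfies $I(S;w)=\{v\}$, so $w\auxiliary$ is a $v$-coloured edge of $G_S$. Every pair $a\neq b$ with $I(S';a)=I(S';b)$ satisfies $I(S;a)\,\triangle\,I(S;b)=\{v\}$, because it is nonempty since $S$ is identifying. Hence $ab$ is a $v$-coloured edge. Since there is exactly one such edge, the only possible failure of $S'$ is the one coming from $e$.

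Next I show that these good properties survive adding any vertex $u$. A nonempty $I$-set stays nonempty. If some $w\in S'$ lies in exactly one of $N[a]$ and $N[b]$, then $w$ still separates $a$ and $b$ in $S'\cup\{u\}$. So $\koodiswap{S}{v}{u}=S'\cup\{u\}$ fails at most at $e$, and it suffices to choose $u\notin S$ that repairs $e$.

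There are two cases, depending on whether $e$ joins two true vertices or a true vertex and $\auxiliary$.
\begin{enumerate}[(a)]
\item Suppose $e=xy$ with $x,y$ true vertices, so $I(S;x)\,\triangle\,I(S;y)=\{v\}$. Since $v$ is not always-forced, Lemma~\ref{Connection} gives $N[x]\,\triangle\,N[y]\neq\{v\}$. This symmetric difference contains $v$ by Lemma~\ref{lemma:G_S-properties}(i), so it contains some $u\neq v$. This $u$ is not in $S$, since otherwise $u\in I(S;x)\,\triangle\,I(S;y)$. Adding $u$ separates $x$ and $y$.
\item Suppose $e=x\auxiliary$, so $I(S;x)=\{v\}$. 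Since $G$ is connected and nontrivial, $|N[x]|\ge 2$, so we can pick $u\in N[x]\setminus\{v\}$. Then $u\notin S$ because $I(S;x)=\{v\}$, and adding $u$ makes the $I$-set of $x$ nonempty.
\end{enumerate}

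The step needing the most care is case (a): the hypothesis that $v$ is not always-forced must be turned, via Lemma~\ref{Connection}, into the existence of an extra separating vertex, and one must check that this vertex is not already a codeword. Everything else follows directly from the definition of $G_S$.
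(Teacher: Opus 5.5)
Your proof is correct and follows essentially the same route as the paper. It uses the same two cases: the unique $v$-coloured edge joins two true vertices, or a true vertex and $v_{-1}$. You pick $u$ from $(N[x]\,\triangle\,N[y])\setminus\{v\}$ or from $N[x]\setminus\{v\}$ respectively, and your preliminary check that $S\setminus\{v\}$ fails only at that edge simply spells out the paper's brief remark that no other pair relied solely on $v$ and that domination is preserved.
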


\begin{proof}
Let $xy \in E(G_S)$ be the only edge with  colour $v$.
First, we assume that both $x$ and $y$ are true vertices.
By Lemma~\ref{lemma:G_S-properties}(i), $v \in N_G[x]\, \triangle \, N_G[y]$ (it is possible that $v =x$ or $v=y$).
Since we assumed that $v$ is not always-forced, it cannot be that $\{v\} = N_G[x]\, \triangle \, N_G[y]$, and therefore, there exists another vertex $u \in V(G)$ such that $u \in N_G[x]\, \triangle \, N_G[y]$.
Furthermore, we know that $u \notin S$, since $v$ is the only codeword in $S$ that distinguishes $x$ and $y$.
The set $\koodiswap{S}{v}{u}$ is an ID code in $G$ since now the vertex $u$ distinguishes $x$ and $y$, and no other pair of vertices relied solely on $v$ for identification, and all vertices are still dominated under $\koodiswap{S}{v}{u}$ since $x$ and $y$ were true vertices.

Then, let us assume that $y = \auxiliary$ (the case $x = \auxiliary$ goes similarly).
By Lemma~\ref{lemma:G_S-properties}(i), $v \in N_G[x]$.
The edge $x \auxiliary$ with  colour $v$ means that $v$ is the only vertex that dominates $x$ (it may be that $v=x$).
Since $G$ is connected and nontrivial, we cannot have $\{v\} = N_G[x]$.
It follows that there exists another vertex $u \in V(G)$ such that $u \in N_G[x]$.
The vertex $u$ is not in $S$, because $x$ is dominated only by $v$. 
The set $\koodiswap{S}{v}{u}$ is now an ID code in $G$.
 \end{proof}

In the following result, we consider the number of edges in $G_S$ which have a colour related to a proper-min-forced vertex. 
\begin{corollary}
\label{cor:atleast2edges-pmf}
    Let $G$ be a connected, nontrivial and closed-twin-free graph and let $S$ be a minimum identifying code in $G$.
    If $v \in S$ is proper-min-forced, then there are at least 2 edges with  colour $v$ in the graph $G_S$. 
\end{corollary}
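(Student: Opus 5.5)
The plan is to argue by contradiction, combining Lemma~\ref{lemma:G_S-properties}(iii) with Lemma~\ref{lemma:atleast2edges}. Let $S$ be a minimum identifying code and $v\in S$ a proper-min-forced vertex. Note first that $v\in S$ holds automatically, since $v$ is min-forced.

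First I will count the $v$-coloured edges from below. A minimum identifying code is in particular minimal: if some proper subset of $S$ were an identifying code, it would be smaller than $S$, contradicting minimality of cardinality. So Lemma~\ref{lemma:G_S-properties}(iii) applies to $S$ and gives at least one edge of colour $v$ in $G_S$.

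Now suppose, for contradiction, that there is exactly one edge of colour $v$. Since $v$ is proper-min-forced, it is not always-forced. Hence the hypotheses of Lemma~\ref{lemma:atleast2edges} hold: $G$ is connected, nontrivial and closed-twin-free, $S$ is an identifying code, $v\in S$ is not always-forced, and exactly one edge has colour $v$. The lemma yields a vertex $u\notin S$ such that $\koodiswap{S}{v}{u}$ is an identifying code.

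Because $u\notin S$ and $v\in S$, we have $\abs{\koodiswap{S}{v}{u}}=\abs{S}=\gamma^{ID}(G)$. So $\koodiswap{S}{v}{u}$ is a minimum identifying code that does not contain $v$. This contradicts $v$ being min-forced, and therefore there are at least two edges of colour $v$.

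No step is a real obstacle; the only point needing care is checking that a minimum code is minimal, so that (iii) applies, and that the swap really preserves cardinality, which uses $u\notin S$.
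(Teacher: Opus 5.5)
Your proposal is correct and follows the paper's proof: Lemma~\ref{lemma:G_S-properties}(iii) gives at least one edge of colour $v$. Lemma~\ref{lemma:atleast2edges} rules out exactly one such edge, because the swapped set $\koodiswap{S}{v}{u}$ would then be a minimum identifying code avoiding $v$; your explicit checks that a minimum code is minimal and that $u\notin S$ preserves the cardinality fill in details the paper leaves implicit.
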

\begin{proof}
    By Lemma~\ref{lemma:G_S-properties}(iii), there must be at least one edge with colour $v$ in $G_S$.
    By definition, proper-min-forced vertices are not always-forced.
    The set $\koodiswap{S}{v}{u}$ cannot be an ID code in $G$ for any choice of $u$ since $v$ is proper-min-forced and must belong to every minimum ID code.
    Therefore, by Lemma~\ref{lemma:atleast2edges}, there are at least 2 edges with colour $v$ in the graph $G_S$ for each proper-min-forced $v\in S$.
     \end{proof}

 In the code and colour graph of Figure~\ref{fig:example}, the vertex $1$ is proper-min-forced. It is guaranteed by Corollary~\ref{cor:atleast2edges-pmf} to have at least two $1$-coloured edges in $G_S$ and, in this example, it has three. However, there are graphs and codes with exactly two edges corresponding to a colour of a proper-min-forced vertex. Indeed, in the graph of Example~\ref{shovelgraph}, it is easy to verify that  the vertex $x_1$ has only two edges of its colour.

In what follows,  we consider certain subgraphs of a colour graph for which we need the concept of a   \emph{cactus graph}. A cactus graph is defined as a graph in which no two cycles share an edge. For completeness, we give the proof of the following known (see, \textit{e.g.}, \cite{hernando2018locating}) result.

\begin{lemma}
    \label{lemma:cactus_bound2}
    Let $H$ be a graph such that all its connected components are cacti. Then $\abs{V(H)} = \abs{E(H)} - \cy(H) + \cc(H)$, where $\cy(H)$ is the number of cycles in $H$ and $\cc(H)$ is the number of connected components in $H$.
\end{lemma}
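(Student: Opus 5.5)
The plan is to reduce the statement to the standard formula for connected cacti and then sum over components. Since both sides of $\abs{V(H)} = \abs{E(H)} - \cy(H) + \cc(H)$ are additive over connected components, with $\cc$ contributing exactly $1$ per component, it suffices to prove that a connected cactus $C$ satisfies $\abs{V(C)} = \abs{E(C)} - \cy(C) + 1$. Summing this identity over all $\cc(H)$ components of $H$ then gives the claim.

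For a connected cactus $C$, I would argue by induction on $\cy(C)$.
\begin{itemize}
\item \emph{Base case.} If $\cy(C)=0$, then $C$ is a tree, so $\abs{E(C)} = \abs{V(C)}-1$, which is exactly the formula.
\item \emph{Inductive step.} If $C$ has a cycle $Z$, choose an edge $e$ of $Z$ and delete it. The graph $C-e$ is still connected, because the rest of $Z$ joins the endpoints of $e$. It is still a cactus, since it is a subgraph of a cactus. It has one edge fewer and the same vertex set.
\end{itemize}

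The main point is to check that $\cy(C-e)=\cy(C)-1$, i.e., that $Z$ is the only cycle of $C$ through $e$. Suppose a second cycle $Z'\neq Z$ contained $e$. Then $Z$ and $Z'$ would share the edge $e$, contradicting the cactus property. Hence deleting $e$ destroys exactly one cycle and creates none. The cycles of $C-e$ are precisely the cycles of $C$ other than $Z$.

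Applying the induction hypothesis to $C-e$ gives $\abs{V(C)} = (\abs{E(C)}-1) - (\cy(C)-1) + 1$, which is the desired identity. The only delicate point is this uniqueness of the cycle containing $e$, and it follows immediately from the definition of a cactus. The rest of the argument is routine bookkeeping.
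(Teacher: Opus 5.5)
Your proof is correct, but it takes a different route from the paper.

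The paper's argument is topological. It applies Euler's formula for plane graphs with several components, $\abs{V(H)} - \abs{E(H)} + F = 1 + \cc(H)$. It then asserts that a cactus is planar and, because its cycles are edge-disjoint, has exactly $F=\cy(H)+1$ faces.

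Your argument is combinatorial. You reduce to a connected cactus and induct on the number of cycles by deleting one edge $e$ of a cycle $Z$. The cactus property is used exactly once, to conclude that $Z$ is the only cycle through $e$. So $\cy$ drops by precisely one, while connectivity and the vertex set are preserved.

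What each approach buys:
\begin{itemize}
\item Your version is elementary and self-contained, with no appeal to planarity or embeddings.
\item It also supplies the justification the paper's face count leaves implicit. Proving that each cycle adds exactly one face needs essentially your edge-deletion step: an edge on a cycle borders two distinct faces, which merge when it is removed.
\item The paper's version is shorter on the page, but only because that key step is stated rather than proved.
\end{itemize}

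Both arguments really identify the cyclomatic number $\abs{E(H)}-\abs{V(H)}+\cc(H)$ with $\cy(H)$ for graphs whose components are cacti.
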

\begin{proof}
By the Euler's formula for planar graphs, we get $\abs{V(H)} - \abs{E(H)}+ F - \cc(H) = 1$ where $F$ is the number of faces. 
Cactus graphs are planar, and since cycles are edge-disjoint, the number of faces in $H$ is $\cy(H) + 1$. We get
\[    \abs{V(H)} - \abs{E(H)}+ F - \cc(H) = 1 \Rightarrow \abs{V(H)}  =  \abs{E(H)}- \cy(H) + \cc(H).\]
\end{proof}

We also need an auxiliary lemma concerning cycles.
\begin{lemma}[Lemma~3 in \cite{Hakanen2025tight-bound}]
\label{lemma:cactus-theta}
Let $G$ be a graph and let $C_1$ and $C_2$ be (simple) cycles in $G$.
If $E(C_1)$ and $ E(C_2)$ have at least one edge in common, then there exists a third cycle $C_3$ in $G$ such that
the subgraph induced by the vertices of the intersection $V (C_1) \cap V (C_3)$ is a nontrivial path.

\end{lemma}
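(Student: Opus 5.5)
The plan is an ear argument. We assume $C_1 \neq C_2$; otherwise the statement is vacuous, since for $C_3 = C_1$ the intersection would be a whole cycle. We read the conclusion as saying that the common part of $C_1$ and $C_3$ is a path with at least one edge lying along $C_1$, i.e.\ $V(C_1)\cap V(C_3)$ is the vertex set of a subpath of $C_1$ with at least two vertices. Chords of $G$ play no role, which is how the statement is used for cactus-type counting. The idea is to find a segment of $C_2$ that leaves $C_1$ and later returns to it, and then close it up with one arc of $C_1$.

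\textbf{Step 1.} $C_2$ has an edge outside $E(C_1)$. Otherwise $C_2$ would be a $2$-regular connected subgraph of the cycle $C_1$, which forces $C_2 = C_1$.

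\textbf{Step 2.} Extract an ear. Since $C_2$ shares an edge with $C_1$, it contains at least two vertices of $C_1$. Pick an edge $e \in E(C_2)\setminus E(C_1)$. Walk along $C_2$ from $e$ in both directions until $C_1$ is first hit on each side (possibly immediately, at the endpoints of $e$). This gives a subpath $P$ of $C_2$ with the following properties:
\begin{itemize}
\item $P$ has endpoints $a,b \in V(C_1)$ and all internal vertices outside $V(C_1)$;
\item $P$ contains $e$, so $P$ has at least one edge not in $C_1$.
\end{itemize}
We need $a \neq b$. If $a=b$, then $P$ would be all of $C_2$ minus nothing, i.e.\ $C_2$ would meet $C_1$ in the single vertex $a$, contradicting that they share an edge.

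\textbf{Step 3.} Close the ear. Let $Q_1$ and $Q_2$ be the two $a$--$b$ arcs of $C_1$, and set $C_3 = P \cup Q_i$ for a suitable $i$.
\begin{itemize}
\item If $P$ has an internal vertex, either arc works: $P$ and $Q_i$ are internally disjoint, so $P \cup Q_i$ is a cycle.
\item If $P$ is the single edge $ab$, then $ab \notin E(C_1)$ is a chord, so both arcs have length at least $2$, and $ab \cup Q_i$ is still a cycle.
\end{itemize}
Then $V(C_1)\cap V(C_3) = V(Q_i)$, because the internal vertices of $P$ avoid $C_1$. This is the vertex set of a subpath of $C_1$ with at least two vertices, as required.

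\textbf{Main obstacle.} The only delicate point is Step 2: guaranteeing that the ear has distinct endpoints and really leaves $C_1$. This is exactly where the hypothesis of a shared edge, rather than just a shared vertex, is used. The rest is routine.
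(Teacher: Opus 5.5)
The paper gives no proof of this lemma; it quotes it as Lemma~3 of \cite{Hakanen2025tight-bound}. So there is no in-paper route to compare with. Your ear argument is correct and self-contained. Take a maximal segment $P$ of $C_2$ through an edge $e\notin E(C_1)$ whose interior avoids $V(C_1)$. Its ends $a,b$ are distinct, because the shared edge puts at least two vertices of $C_1$ on $C_2$. Then $P$ together with an $a$--$b$ arc $Q_i$ of $C_1$ is a cycle meeting $C_1$ exactly along $Q_i$. Your decision to ignore chords of $G$ is also right. Read as ``induced in $G$'', the statement already fails for $G=K_4$ with $C_1$ a Hamiltonian cycle and $C_2$ a triangle, since every cycle of $K_4$ lies inside $V(C_1)$.

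A few points should be tightened.
\begin{enumerate}[(1)]
\item The case $C_1=C_2$ is not vacuous. If $G$ is itself a single cycle, the conclusion is false. Distinctness of $C_1$ and $C_2$ is therefore an implicit hypothesis, and that is how the lemma is used in Lemma~\ref{lemma:2_edges_cacti}(a). Say so rather than calling the case vacuous.
\item The application needs the \emph{shared edges} to form a path. Your reformulation ``$V(C_1)\cap V(C_3)$ is the vertex set of a subpath of $C_1$'' does not imply this by itself, since a cycle could a priori visit those vertices using chords. Your construction does give it, but record why. Every edge of $P$ is either the chord $e$ or incident with an interior vertex of $P$, so $E(P)\cap E(C_1)=\emptyset$. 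Hence $E(C_1)\cap E(C_3)=E(Q_i)$, i.e.\ $C_1\cap C_3=Q_i$ as subgraphs, which is exactly what the colour-counting argument uses.
\item Suppose ``induced'' is read inside $C_1$, and you are in the case $ab\in E(C_1)$ with $P$ having an interior vertex. Then you must take $Q_i$ to be the edge $ab$ itself. The long arc gives $V(C_1)\cap V(C_3)=V(C_1)$, which induces all of $C_1$. Under the subgraph-intersection reading, either arc is fine.
\item The justification of $a\neq b$ (``all of $C_2$ minus nothing'') is garbled, though the reason behind it is right. Traversing $C_2-e$ from one end of $e$ to the other, $a$ and $b$ are the first and last vertices of $C_1$ met. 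They coincide only if $C_2$ meets $C_1$ in a single vertex, which the shared edge rules out.
\end{enumerate}
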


Next we will look at specific spanning subgraphs $H$ of the colour graph $G_S$ where we choose at most $2$ of the available edges with  the same colour.

\begin{lemma}
    \label{lemma:2_edges_cacti}
    Let $G$ be a connected, nontrivial and closed-twin-free graph and let $S$ be a minimum identifying code in $G$ of order $n$.
    \begin{enumerate}[(a)]
        \item     Choosing $0$ or $2$ edges of each colour of $G_S$  results in a subgraph whose connected components are bipartite cacti.
        \item    Choosing $0$, $1$ or $2$ edges of each colour of $G_S$ results in a subgraph whose connected components are bipartite cacti.
        \item    Choosing edges from $E(G_S)$ in such a way that $k$ colours appear once and $p$ colours appear twice  results in a spanning subgraph $H$ for which $n+1 \geq \frac{3}{2}p + k + \cc(H) \geq \frac{3}{2}p + k + 1$.
    \end{enumerate}    
\end{lemma}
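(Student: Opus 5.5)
The plan is to prove the three parts in order: the cactus structure for (a) and (b) is the real content, and (c) is then a counting argument via Lemma~\ref{lemma:cactus_bound2}. Throughout, $H$ is a subgraph of $G_S$, so every cycle of $H$ is a cycle of $G_S$. By Lemma~\ref{lemma:G_S-properties}(v), $H$ is therefore bipartite, and every colour appearing on a cycle of $H$ appears an even number of times on it. Since $H$ contains at most two edges of each colour, any colour appearing on a cycle of $H$ appears on it exactly twice, and both edges of that colour in $H$ lie on that cycle. This already settles one point for (b): an edge whose colour is chosen only once lies on no cycle. So (b) follows once I prove (a) in the form ``at most two edges of each colour''.

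For the cactus property, suppose two cycles of $H$ share an edge. I apply Lemma~\ref{lemma:cactus-theta} inside $H$. It gives cycles $C_1$ and $C_3$ of $H$ such that $V(C_1)\cap V(C_3)$ induces a nontrivial path $P$. I write $C_1 = P\cup Q_1$ and $C_3 = P\cup Q_3$, where $Q_1$ and $Q_3$ are paths between the endpoints of $P$. Their interiors avoid $V(C_1)\cap V(C_3)$, so $Q_1$ and $Q_3$ are edge-disjoint from each other and from $P$.

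Now take a colour $c$ on an edge of $P$. Suppose $P$ contains only one $c$-edge. Then $C_1$ needs a second $c$-edge, which must lie in $Q_1$, and $C_3$ needs a second $c$-edge, which must lie in $Q_3$. That gives three $c$-edges in $H$, a contradiction. Hence every colour on $P$ occurs on $P$ an even number of times. $P$ has no repeated edges, so Lemma~\ref{lemma:G_S-properties}(vi) forces $P$ to be closed. This contradicts $P$ being a nontrivial path with distinct endpoints. So no two cycles of $H$ share an edge, and each component is a bipartite cactus. This step is the main obstacle; the delicate part is checking that Lemma~\ref{lemma:cactus-theta} really yields the decomposition into $P$, $Q_1$, $Q_3$ with the edge-disjointness used above.

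For (c), $H$ is spanning in $G_S$, so $\abs{V(H)} = n+1$, and $\abs{E(H)} = k+2p$. By (b) and Lemma~\ref{lemma:cactus_bound2},
\[ n+1 = k + 2p - \cy(H) + \cc(H). \]
It remains to show $\cy(H)\le p/2$. Every cycle is even, since $H$ is bipartite and has no loops or multiple edges, so it has length at least $4$. By the first paragraph, each cycle consists of complete pairs of twice-chosen colours, so it contains at least two such colours in full. Cycles are edge-disjoint, and both edges of a colour lie on the same cycle, so distinct cycles use disjoint sets of these colours. Hence $2\cy(H)\le p$. Substituting gives
\[ n+1 \ge \tfrac32 p + k + \cc(H) \ge \tfrac32 p + k + 1, \]
as claimed.
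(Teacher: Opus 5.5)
Your proof is correct and is essentially the paper's own argument. It uses the same reduction via Lemma~\ref{lemma:cactus-theta} to a shared path $P$ on which every colour must occur twice, which contradicts Lemma~\ref{lemma:G_S-properties}(vi), and the same count $n+1=\abs{E(H)}-\cy(H)+\cc(H)$ with $\cy(H)\le p/2$ for (c). The differences are cosmetic: you get (b) by running the (a) argument under ``at most two edges per colour'' instead of the paper's colour-by-colour induction, and you justify $\cy(H)\le p/2$ more explicitly; the decomposition $C_1=P\cup Q_1$, $C_3=P\cup Q_3$ that you flag as delicate is exactly how the paper itself reads Lemma~\ref{lemma:cactus-theta}.
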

\begin{proof}
First, notice that any subgraph of a bipartite graph is bipartite, and by Lemma~\ref{lemma:G_S-properties}(v), the graph $G_S$ is bipartite.
    \begin{enumerate}[(a)]
        \item 
        
        Suppose to the contrary that the subgraph $H_a$ is not composed of cacti, and thus the property ``no two cycles share an edge'' \emph{does not} hold.
        Then there are two cycles $C_1$ and $C_2$ such that $E(C_1)  \cap E(C_2)$ contains an edge.
        By Lemma~\ref{lemma:cactus-theta}, we may assume that the intersection of their edges form a nontrivial path $P$.
        We will name the other parts of the cycles with $C_1 \setminus C_2 = P^1$ and $C_2 \setminus C_1 = P^2$.
        Now, by Lemma~\ref{lemma:G_S-properties}(v), any colour $u$ that appears on the shared path $P$ must appear somewhere else on the cycles $C_1$ and $C_2$.
        If $u$ appears on $P^1$, then it cannot appear on $P^2$ by the choice of edges (in $H_a$ there were at most two of each chosen colour), making $C_2$ a cycle on which $u$ appears only once.
        By the same argument, $u$ cannot appear on $P^2$.
        Therefore, $u$ must appear twice on the shared path $P$.
        This holds for all colours that appear on $P$, making it a walk where each colour appears an even number of times, and by Lemma~\ref{lemma:G_S-properties}(vi), it is a closed walk giving a contradiction. Therefore,
cycles in $H_a$ do not have edges in common, that is, the connected components of $H_a$ are cacti.

        \item By the previous case (a), a (spanning) subgraph $H_a$ of $G_S$ with 0 or 2 edges of each colour is composed of cacti. In what follows, we add a new colour one at the time and the same new colour only once. Next we show that the property of connected components being cacti remains. Let us add these single colours recursively and
         assume that we have a spanning graph $H_b$ of $G_S$ composed of cacti with 0, 1 or 2 edges of each colour.
        Let us now create a graph $H'$ with $V(H') = V(G_S)$ and $E(H') = E(H_b) \cup \{xy\}$, where the $xy \in E(G_S)$ is of a colour that does not yet appear in $H_b$, say $w \in S$.
        The edge $xy$ cannot become part of a cycle in $H'$, since on such a cycle there were no other edges with  colour $w$, and by Lemma~\ref{lemma:G_S-properties}(v), each colour on a cycle appears an even number of times. 
        Since adding $xy$ does not create new cycles in $H'$, the property of $H_b$ concerning cycles is inherited by $H'$. Clearly, the subgraph $H'$ is also bipartite.
        
        \item There are $2p + k$ edges in $H$ where $p$ is the number of colours appearing twice and $k$ is the number of colours appearing exactly once.
        Based on the previous case~(b), $H$ is a bipartite graph, and hence each cycle of $H$ has at least $4$ vertices. Therefore, by Lemma~\ref{lemma:G_S-properties}(v), there can be at most $p/2$ cycles in $H$ (since none of the $k$ edges occurring exactly once can take part in a cycle). Combining these observations with Lemma~\ref{lemma:cactus_bound2}, we get $n+1= \abs{V(H)} = \abs{E(H)} - \cy(H) + \cc(H) \geq 2p + k - \frac{p}{2} + \cc(H) = \frac{3}{2}p + k + \cc(H) \geq \frac{3}{2}p + k + 1$.
    \end{enumerate}    
     \end{proof}

Finally, we are ready to prove our main theorem of this section.

\begin{theorem}\label{theo:bound}
    If $G$ is a connected, nontrivial and closed-twin-free graph of order $n$ with $\mfv(G)$ proper-min-forced vertices, then $\mfv(G)/2 + \gamma^{ID}(G) \leq n$ and, in particular, 
    \[
    \mfv(G) \leq \frac{2}{3}n.
    \]
\end{theorem}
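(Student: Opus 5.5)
Fix a minimum identifying code $S$ of $G$, so $\abs{S} = \gamma^{ID}(G)$, and work in the colour graph $G_S$, which has $n+1$ vertices. Every proper-min-forced vertex lies in every minimum ID code, so all $\mfv(G)$ of them are codewords of $S$. The plan is to build a spanning subgraph $H$ of $G_S$ that uses many colours, and then apply the counting inequality of Lemma~\ref{lemma:2_edges_cacti}(c).

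For each codeword $u\in S$ we choose edges as follows.
\begin{enumerate}[(1)]
\item If $u$ is proper-min-forced, Corollary~\ref{cor:atleast2edges-pmf} guarantees at least two edges of colour $u$ in $G_S$; we choose exactly two of them.
\item Otherwise we choose exactly one edge of colour $u$. Such an edge exists by Lemma~\ref{lemma:G_S-properties}(iii), because a minimum ID code is minimal.
\end{enumerate}
This gives $p=\mfv(G)$ colours appearing twice and $k=\gamma^{ID}(G)-\mfv(G)$ colours appearing once.

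Lemma~\ref{lemma:2_edges_cacti}(c) then yields
\[
n+1 \ge \tfrac{3}{2}\mfv(G) + \gamma^{ID}(G) - \mfv(G) + 1,
\]
that is, $\mfv(G)/2 + \gamma^{ID}(G) \le n$, which is the first claim.

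For the bound $\mfv(G)\le \frac{2}{3}n$, we combine this with $\gamma^{ID}(G)\ge \mfv(G)$, which holds because all proper-min-forced vertices lie in $S$. Then $\frac{3}{2}\mfv(G) \le \mfv(G)/2+\gamma^{ID}(G) \le n$, giving $\mfv(G)\le 2n/3$.

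The substantive step is the application of Lemma~\ref{lemma:2_edges_cacti}(c), which needs at most two edges of each colour. That condition holds by construction, and the lemma is already proved, so essentially no obstacle remains. The only point to check is that Corollary~\ref{cor:atleast2edges-pmf} applies. It requires $v\in S$, which holds since $v$ is min-forced and $S$ is a minimum code.
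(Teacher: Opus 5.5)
Your proposal is correct and follows the paper's proof essentially verbatim. You build the same spanning subgraph $H$ of $G_S$, with exactly two edges of each proper-min-forced colour and one edge of every other codeword colour. You then apply Lemma~\ref{lemma:2_edges_cacti}(c) with $p=\mfv(G)$ and $k=\gamma^{ID}(G)-\mfv(G)$, and finish with $\gamma^{ID}(G)\ge \mfv(G)$, exactly as the paper does.
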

\begin{proof}
Let $S \subseteq V(G)$ be a minimum identifying code in $G$.
By Corollary~\ref{cor:atleast2edges-pmf}, there are at least 2 edges with  colour $v$ in $G_S$ for each proper-min-forced vertex $v \in S$.
With that in mind, we select a specific subgraph $H$ of $G_S$ with $V(H) = V(G_S)$. 
Let $E(H)$ be a set of edges consisting of exactly $2$ edges of colour $v$ for each proper-min-forced $v$ and $1$ edge of colour corresponding to each other codeword (which is possible due to Lemma~\ref{lemma:G_S-properties}(iii)).
The graph $H$ is such that Lemma~\ref{lemma:2_edges_cacti}(c) applies with $p = \mfv(G)$ and $k = \gamma^{ID}(G)-\mfv(G)$. 
Therefore, we have 
\[
\frac{3}{2} \mfv(G) + (\gamma^{ID}(G)-\mfv(G)) +1  \leq   n+1.
\]
Consequently, $\mfv(G)  \leq  2n - 2\gamma^{ID}(G)  \leq 2n - 2\mfv(G)$
and thus, we get 
$ \mfv(G)  \leq  \frac{2}{3}n,$
which gives our claim.
 \end{proof}

\begin{figure}[h]
     \centering
    \begin{tikzpicture}[yscale=0.7, xscale=0.7, rotate=0]

    \renewcommand*{\EdgeLineWidth}{ 0.5pt}
    \SetVertexMath
    \SetVertexLabelOut

    {
    \SetVertexNoLabel
    \tikzstyle{VertexStyle}=[minimum size=1pt,inner sep=0pt]

    \Vertex[x=3, y=2]{1}
    \Vertex[x=3.9, y=3]{2}
    \Vertex[x=5.1, y=2.6]{3}
    \Vertex[x=5.1, y=1.4]{4}
    \Vertex[x=3.9, y=1]{5}
    \Vertex[x=1.5, y=2]{6}
    \Vertex[x=0, y=3]{7}
    \Vertex[x=-1.3, y=3.5]{8}
    \Vertex[x=-1.3, y=2.5]{9}
    \Vertex[x=0, y=1]{10}
    \Vertex[x=-1.3, y=0.5]{11}
    \Vertex[x=-1.3, y=1.5]{12}

    }

    \tikzstyle{EdgeStyle}=[bend right=0]
    \Edge(1)(2)
    \Edge(2)(3)
    \Edge(3)(4)
    \Edge(4)(5)
    \Edge(5)(1)
    \Edge(1)(6)
    \Edge(6)(7)    
    \Edge(6)(10)
    \Edge(7)(8)
    \Edge(7)(9)
    \Edge(10)(11)
    \Edge(10)(12)

    \tikzstyle{VertexStyle}=[circle,draw, fill=red!80]
    \SO[unit=0, Lpos=90](1){v_1}
    \SO[unit=0, Lpos=90](2){v_2}
    \SO[unit=0, Lpos=90](5){v_5}
    \SO[unit=0, Lpos=180](8){x_1}  
    \SO[unit=0, Lpos=180](11){y_2}
    \SO[unit=0, Lpos=180](12){x_2}
    \SO[unit=0, Lpos=180](9){y_1}

    \tikzstyle{VertexStyle}=[circle,draw, fill=white]
    \SO[unit=0, Lpos=0](3){v_3}
    \SO[unit=0, Lpos=0](4){v_4}
    \SO[unit=0, Lpos=90](6){w} 
    \SO[unit=0, Lpos=90](7){z_1}
    \SO[unit=0, Lpos=00](10){z_2}

\end{tikzpicture}

     \caption{The graph $G_2$. Here the highlighted vertices form a minimum ID code and all of them are also proper-min-forced.}
    \label{fig:lapio}
\end{figure}

In the next example, we will construct an infinite family of graphs with $2n/3-1$ proper-min-forced vertices when the order $n\ge 9$ is divisible by three. This amount of proper-min-forced vertices is \emph{just one off} from the bound $2n/3$ in Theorem~\ref{theo:bound}.  

\begin{example}\label{shovelgraph}
    Let $k \geq 1$.
    Let us define the  graph $G_k = (V(G_k), E(G_k))$ with
    $V(G_k) = \{w, v_1, v_2, v_3,v_4, v_5\} \cup \bigcup_{i = 1}^{k} \{x_i, y_i, z_i\}$
    and
    \[E(G_k) = \{wv_1, v_1v_2, v_2v_3, v_3v_4, v_4v_5, v_5v_1\} \bigcup_{i = 1}^{k} \{wz_i, x_iz_i, y_iz_i\}.\]
    The graph $G_2$ is illustrated in Figure~\ref{fig:lapio}.
    We claim that the vertices $v_1$, $v_2$, $v_5$ and $x_i$ and $y_i$ for all $i \in \{1, \dots, k\}$ are proper-min-forced.
    First, we will show that $\gamma^{ID} (G_k) = 2k + 3$.

    Let $S$ be an ID code in $G_k$. We have immediately the following two properties for $S$. 
    
    {\bf Fact 1:} We have $\abs{S \cap \{x_i, y_i, z_i\}} \geq 2$ for all $i$, because we need to dominate and distinguish the leaves $x_i$ and $y_i$.

    {\bf Fact 2:} We have $\abs{S \cap \{v_1, v_2, v_3, v_4, v_5\}} \geq 3$ since with at most two codewords we can obtain at most $3$ distinct nonempty $I$-sets, which is not enough to distinguish the four vertices $v_2, v_3, v_4, v_5$.

    Due to these two facts, we get $\gamma^{ID}(G_k) \geq 2k + 3$. Next we give an ID code that attains this bound. The set $S = \{v_1, v_2, v_5\} \cup \bigcup_{i = 1}^{k} \{x_i, y_i\}$ is an ID code in $G_k$ with $2k + 3$ vertices as seen by the following different and nonempty $I$-sets in $G_k$:  $I(S; v_1) = \{v_1, v_2, v_5\}$, $I(S; v_2) = \{v_1, v_2\}$,
    $I(S; v_3) = \{v_2\}$,
    $I(S; v_4) = \{v_5\}$,
     $I(S; v_5) = \{v_1, v_5\}$,
     $I(S; w) = \{v_1\}$,
     $I(S; x_i) = \{x_i\}$,
     $I(S; y_i) = \{y_i\}$,
     $I(S; z_i) = \{x_i, y_i\}$.
      It follows that $\gamma^{ID}(G_k) = 2k + 3$.

    Now we argue about the min-forcedness of the vertices in $G$.
    Let $S^*$ be a minimum ID code in $G$.
    First, we notice that the vertex $w$ is not in any minimum ID code, since including it would result in a code with at least $2k + 4$ vertices because of Facts 1 and 2.
    Next, we study vertices $x_i$, $y_i$ and $z_i$, and we keep in mind that we can select only two of them for a minimum ID code. 
    Choosing $x_i$ and $z_i$ yields $I(z_i) = \{x_i, z_i\} = I(x_i)$, and similarly does the choice $y_i$ and $z_i$.
    Hence, we need to choose $x_i$ and $y_i$ to any $S^*$.

    Now that the vertex $w$ is not dominated by itself or any vertex of type $z_i$, we must have $v_1$ in every minimum ID code.
    Then, to distinguish $w$ and $v_1$, we must have $\abs{S^* \cap \{ v_2, v_5\}} \geq 1$, and without loss of generality, let us assume that $v_2$ is in the code.
    Then $\abs{S^* \cap \{v_5, v_3\}} \geq 1$ to distinguish $v_1$ and $v_2$. 
    If we choose $v_3 \in S^*$ and examine the resulting code  $S^*$ of size $2k+3$, we notice that $I(S^*;w) = \{v_1\} = I(S^*;v_5)$. 
    Therefore, we are forced to choose $v_5 \in S^*$ and it turns out that the arbitrary minimum ID code $S^*$ is exactly $S$ that we defined above.
    Thus, we have shown that all vertices in $S$ are min-forced.

    It remains to show that the vertices in $S$ are also proper-min-forced. To do so, we verify that none of the min-forced vertices in $S$ is always-forced by providing an identifying code that does not contain it.
    We make use of the fact that the vertices $x_i$ and $y_i$ are open twins (clearly interchangeable) and that the vertices $v_2$ and $v_5$ are in symmetrical positions. 
    Let $S' = \{w, v_3, v_4, v_5\} \cup \bigcup_{i = 1}^{k} \{y_i, z_i\}$.
    We verify that $S'$ is an ID code of size $2k+4$ in $G_{k}$ by listing all $I$-sets in $G_k$:
 $I(S'; v_1) = \{w, v_5\}$,
        $I(S'; v_2) = \{v_3\}$,
        $I(S'; v_3) = \{v_3, v_4\}$,
        $I(S'; v_4) = \{v_3, v_4, v_5\}$,
         $I(S'; v_5) = \{v_4, v_5\}$,
        $I(S'; w) = \{w\} \cup \bigcup_{i = 1}^{k}\{z_i\}$,
        $I(S'; x_i) = \{z_i\}$,
        $I(S'; y_i) = \{y_i, z_i\}$,
        $I(S'; z_i) = \{w, y_i, z_i\}$.

    The min-forced vertices $x_i$, $v_1$ and $v_2$ are not in this ID code, making them proper-min-forced. 
    By symmetry, the vertices $v_5$ and $y_i$ are proper-min-forced as well.
    Consequently, $G_k$, $k\ge 1$, is a graph with $n=3k + 6$ vertices and $2k + 3=2n/3-1$ proper-min-forced vertices, thus \emph{almost} attaining the upper bound $2n/3$ of Theorem~\ref{theo:bound}.
\end{example}

We remark that in Example~\ref{shovelgraph} we utilized $P_3$ as an induced subgraph of $G_k$ on vertices $x_i$, $y_i$ and $z_i$ ($i=1,2,\dots,k)$. The path $P_3$ itself has $2n/3$ min-forced vertices (compare to Theorem~\ref{theo:bound}), but those vertices (which are the endpoints of $P_3$) are always-forced -- not proper-min-forced!

\section{An optimal result for dense graphs with proper-min-forced vertices}\label{DenseSec}

In this section, we determine the maximum number of edges in a graph having \emph{proper-min-forced} vertices; for simplicity, we focus on the graphs of even order.

For this purpose, we begin with the following useful lemma.
\begin{lemma}\label{lemma:XY}
    Let $m$ be an integer such that $m \geq 2$ as well as let $X = \{x_1, \ldots, x_m\}$ and $Y = \{y_1, \ldots, y_m\}$ be subsets of $V(G)$ such that $N(x_i) = N(y_i) = V(G) \setminus \{x_i, y_i\}$ for each $i \in \{1, \ldots, m\}$.
    \begin{itemize}
        \item[(i)] If $S$ is an identifying code in $G$, then $|S \cap (X \cup Y)| \geq 2m-1$. 
        \item[(ii)] Moreover, if $S$ is an identifying code in $G$ and there exists a universal vertex $u \in V(G)$, then $X \cup Y \subseteq S$.
    \end{itemize}
\end{lemma}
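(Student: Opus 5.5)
The plan is to compute the symmetric differences of closed neighbourhoods of the vertices in $X\cup Y$ directly from the hypothesis $N(x_i)=N(y_i)=V(G)\setminus\{x_i,y_i\}$. This gives $N[x_i]=V(G)\setminus\{y_i\}$ and $N[y_i]=V(G)\setminus\{x_i\}$. In particular the $2m$ vertices are distinct: if, say, $x_i=y_j$ for $i\ne j$, then $N[x_i]=V\setminus\{y_i\}$ and $N[y_j]=V\setminus\{x_j\}$ would force $y_i=x_j$, and a short check shows the hypothesis then fails. For distinct closed neighbourhoods of this "complement of a single vertex" form, we get
\[
N[x_i]\,\triangle\,N[x_j]=\{y_i,y_j\},\quad N[x_i]\,\triangle\,N[y_j]=\{y_i,x_j\},\quad N[x_i]\,\triangle\,N[y_i]=\{x_i,y_i\}
\]
for $i\neq j$. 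Hence for any two of the $2m$ vertices $a,b\in X\cup Y$, writing $a'$ for the partner of $a$ (so $x_i'=y_i$ and $y_i'=x_i$), we have $N[a]\,\triangle\,N[b]=\{a',b'\}$. The map $a\mapsto a'$ is a bijection of $X\cup Y$ to itself.

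For (i), suppose $S$ is an ID code missing two distinct vertices $c,d\in X\cup Y$. Then $a=c'$ and $b=d'$ are distinct, and $N[a]\,\triangle\,N[b]=\{c,d\}$ is disjoint from $S$. Thus $I(S;a)=I(S;b)$, a contradiction. So $S$ misses at most one vertex of $X\cup Y$, giving $|S\cap(X\cup Y)|\ge 2m-1$.

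For (ii), suppose in addition that $u$ is universal and some $c\in X\cup Y$ is not in $S$. The vertex $u$ is not in $X\cup Y$, since each $x_i$ and $y_i$ has a non-neighbour. Hence $N[u]=V(G)$, and $N[u]\,\triangle\,N[c']=V(G)\setminus N[c']=\{c\}$. Then $I(S;u)=I(S;c')$, again a contradiction. Therefore $X\cup Y\subseteq S$.

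The only delicate point is checking that the pairs $x_i,y_i$ and the indices are genuinely distinct. This is needed so that the partner map is a bijection and the differences are exactly two-element sets. Everything else is routine set arithmetic; I expect that bookkeeping, rather than any real obstacle, to be the main step.
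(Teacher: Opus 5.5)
Your argument is correct and is essentially the paper's proof: the paper compares exactly the same pairs ($y_i,y_j$ when $x_i,x_j\notin S$; $x_j,y_i$ when $x_i,y_j\notin S$ with $i\neq j$; and the universal vertex $u$ with the partner of the missing vertex in (ii)), only as separate cases rather than through your partner involution $a\mapsto a'$. One remark is false, namely that distinctness of the $2m$ vertices follows from the hypothesis: taking $(x_j,y_j)=(y_i,x_i)$ makes the condition for index $j$ literally the same as for index $i$. For instance, in $K_{2,3}$ with $m=2$, $x_1=y_2$ and $y_1=x_2$ the two degree-$3$ vertices, there is an ID code containing only one of them. So distinctness must be taken as an implicit assumption of the lemma, as the paper tacitly does.
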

\begin{proof}
       Suppose that there are two vertices $x_i$ and $x_j$ that are not in $S$. Since $N[y_i]\, \triangle \, N[y_j] = \{x_i, x_j\}$ and, therefore, $I(y_i) = S = I(y_j)$, this contradicts the fact that $S$ is an identifying code.
    Therefore, there cannot be two vertices in $X$ that are not in $S$.
    Similarly at least one of the vertices $y_i$ and $y_j$ must be in $S$ for all pairs $i \neq j$.
    Now suppose that there are two vertices $x_i, y_j \notin S$. Since $N[x_i]\, \triangle \, N[y_i] = \{x_i, y_i\}$,  it implies that $\abs{\{x_i, y_i\} \cap S}\geq 1$  for all $i \in \{1, \ldots, m\}$. Therefore, we can assume $i\neq j$. Subsequently, $I(x_j) = S = I(y_i)$, giving a contradiction again. This yields the claim (i).

    Let $u$ be a universal vertex in $G$. Clearly, $u\notin X\cup Y$. If $x_i\notin S$ (resp. $y_j\notin S$), then $I(y_i)=S=I(u)$ (resp.  $I(x_j)=I(u))$ which is not possible. This gives the assertion (ii).
     \end{proof}

In what follows, $G_1 \cup G_2$ denotes the disjoint union of the graphs $G_1$ and $G_2$.
For the rest of the section, we name the vertices of disjoint unions of the graph $K_2$ as $x_i$ and $y_i$ (similar to Lemma~\ref{lemma:XY}) when applicable.

\begin{theorem}
    Let $n = 2k$, where $k\ge 6$. There exists a graph of order $n$ containing a proper-min-forced vertex with $\binom{2k}{2}-(k+2)$ edges. 
\end{theorem}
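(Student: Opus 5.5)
We will build the graph through its complement. Write $H=\overline{G}$. For every vertex $u$ we have $N_G[u]=V(G)\setminus N_H(u)$, and this gives two translations. First, $N_G[u]\,\triangle\,N_G[w]=N_H(u)\,\triangle\,N_H(w)$. Second, for a code $S$ we have $I_G(S;u)=S\setminus N_H(u)$. Hence $S$ is an ID code in $G$ if and only if two conditions hold. The \emph{traces} $N_H(u)\cap S$ must be pairwise distinct over all $u\in V(G)$. In addition, no vertex may have $S\subseteq N_H(u)$, which for the sparse $H$ below only fails when $|S|$ is tiny. Since distinct traces are needed, at most one vertex can have empty trace. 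By Lemma~\ref{Connection}, $v$ is always-forced exactly when $N_H(u)\,\triangle\,N_H(w)=\{v\}$ for some pair $u,w$. So the task becomes the following: find $H$ on $2k$ vertices with $k+2$ edges such that some vertex $v$ lies in every minimum "trace-separating" set, but is not such a singleton symmetric difference. Connectivity of $G$ is automatic, since $\Delta(H)$ is small.

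For the construction I would take $H=(k-3)K_2\cup R$, where $R$ is a graph on $6$ vertices with $5$ edges; this gives $k-3+5=k+2$ edges. I would avoid isolated vertices in $H$: an isolated vertex of $H$ is universal in $G$, and then Lemma~\ref{lemma:XY}(ii) makes every vertex of $X\cup Y$ always-forced, while the small part would likely behave the same way. The matching part $X\cup Y$ with $m=k-3\ge 3$ (this is where $k\ge 6$ should enter) is controlled by Lemma~\ref{lemma:XY}(i). In trace language, the trace of $x_i$ is $\{y_i\}\cap S$, so the code contains all of $X\cup Y$ except at most one vertex, and the omitted vertex uses up the unique empty trace. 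My first candidate is $R=P_6$ with vertices $a_1\cdots a_6$, possibly switching to another $5$-edge graph on $6$ vertices, such as a spider or $C_4$ plus a pendant edge, if the computation fails. In $P_6$ the vertices $a_3,a_4$ are always-forced, via the pairs $a_1,a_3$ and $a_4,a_6$. The other vertices are not, which one checks directly from the short list of neighbourhood differences, including the differences between $R$ and the $K_2$'s.

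Next I would determine $\gamma^{ID}(G)$ by analysing trace-separating sets $T=S\cap V(R)$. There are exactly two regimes. In the first, $S\supseteq X\cup Y$, and then $R$ may contain one vertex with empty trace. In the second, exactly one vertex of $X\cup Y$ is missing, and then all traces in $R$ must be nonempty and distinct. This gives $\gamma^{ID}(G)=2m+t$ or $2m-1+t'$, where $t$ and $t'$ are the minimum sizes of $T$ in the two regimes. The goal is to choose $R$ so that $t'=t+1$, or better $t'\ge t+2$. In the case $t'\ge t+2$, every minimum code contains all of $X\cup Y$, and a vertex $x_i$ is then min-forced. It is not always-forced, because $N_H(u)\,\triangle\,N_H(w)$ never equals $\{x_i\}$: for instance $N_H(y_i)=\{x_i\}$, but no vertex has empty $H$-neighbourhood. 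Alternatively, a vertex $a_j$ that lies in every minimum $T$ (in whichever regimes are optimal) but outside $\{a_3,a_4\}$ serves as the proper-min-forced vertex.

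The main obstacle is this finite case analysis on $R$. One must compute $t$ and $t'$ exactly and identify the intersection of the optimal $T$'s, checking all trace collisions, including collisions between $R$-vertices and $K_2$-vertices when traces are singletons such as $\{y_i\}$ versus $\{a_j\}$. These collisions cannot actually occur, since the traces lie in disjoint parts. I would carry this out first for $P_6$, by hand via the forced vertices $a_3,a_4$ and a short branching on $a_1,a_2,a_5,a_6$. If the resulting gap or intersection is not as desired, I would try the other $6$-vertex, $5$-edge graphs without isolated vertices until one works. Finally I would verify that $V(G)\setminus\{v\}$ is an ID code for the chosen $v$, using the symmetric-difference translation above.
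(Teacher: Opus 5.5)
As written, this is a search plan rather than a proof. For an existence statement the whole content is to exhibit one specific $R$ and verify that some non-always-forced vertex lies in every minimum ID code. That is exactly the step you defer (``I would carry this out \dots\ if the computation fails \dots'').

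One of your two targets is also unattainable. Since $R$ has no isolated vertices, take an optimal first-regime set $T$ whose empty trace sits at some $a\in V(R)$, and add a neighbour of $a$. Distinct traces stay distinct under taking supersets, and the empty trace disappears. Hence $t'\le t+1$ always, so the second regime always attains $\gamma^{ID}(G)$. For a fixed optimal $T$ in that regime, the omitted vertex can be any vertex of $X\cup Y$. So no $x_i$ is ever min-forced in your setting. The size of $t'-t$ is beside the point; what matters is the intersection of the optimal sets $T$ in the optimal regime(s).

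Your reason for avoiding an isolated vertex in $\overline{G}$ is also mistaken. The paper takes exactly $\overline{G}=B\cup K_1\cup\bigcup_{i=1}^{k-3}K_2$ with $B$ the bull. There the universal vertex forces $X\cup Y$ by Lemma~\ref{lemma:XY}(ii), and $v_2,v_3$ are always-forced. Checking the few candidates for one extra codeword shows that $A\cup\{v_1\}$ is the unique minimum ID code, while $V(G)\setminus\{v_1\}$ is still an ID code.

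Your first candidate does work, so the gap closes along your own route. Take $R=P_6=a_1a_2\cdots a_6$ with $a_3,a_4\in T$. The only pairs of $V(R)$ not already separated by $a_3$ or $a_4$ are $a_1,a_6$; $a_2,a_4$; and $a_3,a_5$. Their $\overline{G}$-neighbourhood differences are $\{a_2,a_5\}$, $\{a_1,a_5\}$ and $\{a_2,a_6\}$. Hence $t=4$, attained exactly by adjoining $\{a_1,a_2\}$, $\{a_2,a_5\}$ or $\{a_5,a_6\}$ to $\{a_3,a_4\}$. Nonempty traces at $a_1$ and $a_6$ require $a_2,a_5\in T$, so $t'=4$ with the unique set $T=\{a_2,a_3,a_4,a_5\}$.

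Consequently $\gamma^{ID}(G)=2k-3$, and the minimum ID codes are exactly $\bigl((X\cup Y)\setminus\{z\}\bigr)\cup\{a_2,a_3,a_4,a_5\}$ with $z\in X\cup Y$. Thus $a_2$ and $a_5$ are min-forced. They are not always-forced, because no difference of two $\overline{G}$-neighbourhoods equals $\{a_2\}$ or $\{a_5\}$.

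With this computation written out, you have a valid construction that differs from the paper's. In the paper, the universal vertex makes the minimum code unique, so min-forcedness is immediate. Your $\overline{G}=P_6\cup\bigcup_{i=1}^{k-3}K_2$ instead has $2k-6$ minimum codes sharing the common core $\{a_2,a_3,a_4,a_5\}$. Your trace reformulation reduces the whole verification to a small covering problem on $R$.
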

\begin{proof}
Denote the bull graph by $\bull$ where $V(\bull)=\{v_1,v_2,v_3,v_4,v_5\}$ and 
\[E(\bull)=\{v_1v_2,v_1v_3,v_2v_3, v_2v_5,v_3v_4\}.\]

Let us consider a graph $G_{2k}$ defined via its  complement 
$\overline{G}_{2k} = \bull  \cup K_1\cup  \bigcup_{i=1}^{k-3} K_2$, 
where the last part means $k-3$ disjoint unions of $K_2$ and the vertices in the disjoint unions are named $X=\{x_i\mid i=1,2,\dots, k-3\}  $ and $Y=\{y_i \mid i=1,2,\dots, k-3\}$. That is, $x_i$ and $y_i$  are adjacent in the graph $\overline{G}_{2k}$ (illustrated in Figure~\ref{fig:bull-complement}).

\begin{figure}
    \centering
    \begin{tikzpicture}[yscale=0.8, xscale=0.7, rotate=0]

    \renewcommand*{\EdgeLineWidth}{ 0.5pt}
    \SetVertexMath
    \SetVertexLabelOut

    {
    \SetVertexNoLabel
    \tikzstyle{VertexStyle}=[minimum size=1pt,inner sep=0pt]

    \Vertex[x=2, y=0]{1}
    \Vertex[x=1, y=1]{2}
    \Vertex[x=3, y=1]{3}
    \Vertex[x=4, y=2]{4}
    \Vertex[x=0, y=2]{5}
    \Vertex[x=6, y=2]{6}
    \Vertex[x=8, y=2]{7}
    \Vertex[x=6, y=0]{8}
    \Vertex[x=8, y=0]{9}
    \Vertex[x=10, y=1]{10}

    }

\begin{scope} 
        \tikzstyle{VertexStyle}=[rectangle, fill=white,minimum size=15pt,inner sep=3pt]
        \SetVertexLabelIn
        \Vertex[L=\vdots,x=7,y=1.15] {dots}
    \end{scope}
    \tikzstyle{EdgeStyle}=[bend right=0]
    \Edge(1)(2)
    \Edge(1)(3)
    \Edge(2)(3)
    \Edge(3)(4)
    \Edge(2)(5)
    \Edge(6)(7)  
    \Edge(8)(9)

    \tikzstyle{VertexStyle}=[circle,draw, fill=red!80]
    \SO[unit=0, Lpos=90](1){v_1}

    \tikzstyle{VertexStyle}=[circle,draw, fill=black]
    \SO[unit=0, Lpos=-45](3){v_3}
    \SO[unit=0, Lpos=180](6){x_1} 
    \SO[unit=0, Lpos=0](7){y_1}
    \SO[unit=0, Lpos=180](8){x_{k-3}}  
    \SO[unit=0, Lpos=0](9){y_{k-3}}
    \SO[unit=0, Lpos=225](2){v_2}

    \tikzstyle{VertexStyle}=[circle,draw, fill=white]
    \SO[unit=0, Lpos=180](4){v_4}
    \SO[unit=0, Lpos=0](5){v_5}
    \SO[unit=0, Lpos=00](10){u}

\end{tikzpicture}
    \caption{The complement graph $\overline{G}_{2k}$.}
    \label{fig:bull-complement}
\end{figure}

The vertices $x_i$ and $y_i$ are always-forced by Lemma~\ref{lemma:XY}(ii), since the vertex (named as $u$) corresponding to $K_1$ in $\overline{G}_{2k}$ is a universal vertex in $G_{2k}$.
The vertices $v_2$ and $v_3$ (see Figure~\ref{fig:bull-complement}) are always-forced because $N[v_1]\, \triangle \, N[v_4] = \{v_2\}$ and $N[v_1]\, \triangle \, N[v_5] = \{v_3\}$.

Let us denote $\AFV = X \cup Y \cup \{v_2, v_3\}$.
Hence, all the vertices in $\AFV$ are always-forced.
There are $2k-4$ vertices in $\AFV$, and clearly $\gamma^{ID}(G_{2k}) \geq \abs{\AFV} = 2k-4$.
The set $\AFV$ is not an ID code in $G_{2k}$, because $I(\AFV;v_2)=I(\AFV;v_4)$ and $I(\AFV;v_3)=I(\AFV;v_5)$; in fact, it is straightforward to check that the pairs $v_2, v_4$ and $v_3, v_5$ are the only pairs of vertices that are not distinguished by $A$.
Let us consider codes with $2k-3$ vertices.
The sets $\AFV \cup \{u\}$, $\AFV \cup \{v_5\}$ and $\AFV \cup \{v_4\}$ are not ID codes because at least one of the pairs $v_2, v_4$ and $v_3, v_5$ is not distinguished. However, the vertex $v_1$ distinguishes both pairs.
Consequently, the set $\AFV \cup \{v_1\}$ \emph{is} an ID code, and therefore, $\gamma^{ID}(G_{2k}) = 2k-3$.
Therefore, $v_1$ belongs to every minimum ID code, since there is the unique such code.

Finally, we show that the vertex $v_1$ is proper-min-forced by showing that $S=V(G_{2k}) \setminus \{v_1\}$ is indeed an ID code in $G_{2k}$, that is, the vertex $v_1$ is \emph{not} always-forced. 
The $I$-sets with respect to $S$ are nonempty and distinct: $I(S; v_1) = V(G_{2k}) \setminus \{v_1, v_2, v_3\}$, $I(S; v_2) = V(G_{2k}) \setminus \{v_1, v_3, v_5\}$, $I(S; v_3) = V(G_{2k}) \setminus \{v_1, v_2, v_4\}$, $I(S; v_4) = V(G_{2k}) \setminus \{v_1, v_3\}$, $I(S; v_5) = V(G_{2k}) \setminus \{v_1, v_2\}$, $I(S; x_i) = V(G_{2k}) \setminus \{v_1, y_i\}$, $I(S; y_i) = V(G_{2k}) \setminus \{v_1, x_i\}$ and $I(S; u) = V(G_{2k}) \setminus \{v_1\}$. Therefore, $v_1$ is proper-min-forced.
 \end{proof}

In the following theorem we show that the graphs in the previous proof are (some of) the densest graphs of even order that contain proper-min-forced vertices.
\begin{theorem}
    Let $n = 2k$ where $k\ge 6$. Graphs of order $n$ with at least $\binom{2k}{2}-(k+1)$ edges have no proper-min-forced vertices.
\end{theorem}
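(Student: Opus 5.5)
Throughout, write $H=\overline{G}$. Since $N_G[v]=V(G)\setminus N_H(v)$, for distinct $u,v$ we have $N_G[u]\,\triangle\,N_G[v]=N_H(u)\,\triangle\,N_H(v)$ and $I_G(S;v)=S\setminus N_H(v)$. Hence:
\begin{itemize}
\item[(1)] $G$ is closed-twin-free if and only if $H$ has no open twins; in particular, $H$ has at most one isolated vertex.
\item[(2)] $S$ is an ID code in $G$ if and only if $S\not\subseteq N_H(v)$ for every $v$, and $S$ meets $N_H(u)\,\triangle\,N_H(w)$ for all $u\neq w$.
\item[(3)] Since $G$ is very dense, it is connected, so by Lemma~\ref{Connection} a vertex $v$ is always-forced if and only if $N_H(u)\,\triangle\,N_H(w)=\{v\}$ for some pair $u,w$.
\end{itemize}
The plan is to show that the hypothesis $|E(H)|\le k+1$ forces $H$ into a short explicit list of shapes. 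For each shape we compute $\gamma^{ID}(G)$ directly and check that every vertex which is not always-forced is avoided by some minimum ID code. Graphs with more than $\binom{2k}{2}$ edges do not exist, and adding edges to $G$ only shrinks $H$, so it suffices to treat $|E(H)|\le k+1$ exactly in this way.

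\emph{Structural step.} Let the components of $H$ be $C_j$, with $n_j$ vertices and $e_j$ edges, so $e_j\ge n_j-1$. Define the excess $\sum_j (e_j-n_j/2)=|E(H)|-k\le 1$. The contributions are as follows.
\begin{itemize}
\item $K_1$ contributes $-1/2$, and there is at most one such component by (1).
\item $K_2$ contributes $0$.
\item A component with $n_j\ge 3$ contributes at least $n_j/2-1\ge 1/2$. Moreover, $P_3$ and stars are excluded because their leaves are open twins. So a tree component needs $n_j\ge 4$ and contributes at least $1$; $P_4$ contributes exactly $1$, and a triangle contributes $3/2$.
\end{itemize}
Summing these contributions with the total excess at most $1$ leaves only a few possibilities:
\begin{itemize}
\item[(a)] $kK_2$, or $(k-1)K_2\cup K_1$ together with a pendant-free component, or the like;
\item[(b)] $K_1\cup (\text{one component of excess} \le 3/2)\cup$ copies of $K_2$, e.g.\ $K_1\cup K_3\cup mK_2$, $K_1\cup P_5\cup mK_2$, or $K_1\cup C_5$-type or $P_4$-plus-edge types;
\item[(c)] a single $P_4$ or other excess-$1$ component plus copies of $K_2$, with no $K_1$.
\end{itemize}
I will list these shapes carefully, using parity of $n=2k$ and $k\ge 6$ to discard some. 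The bull (excess $3/2$) together with $K_1$ is exactly the extremal example of the previous theorem, with excess $2$. This confirms that the threshold $k+1$ is where the list closes.

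\emph{Code step.} In every shape, $H$ contains $m\ge k-4\ge 2$ disjoint $K_2$-components $x_iy_i$. These satisfy the hypothesis of Lemma~\ref{lemma:XY}, so any ID code contains at least $2m-1$ of these vertices, and all of them if $H$ has an isolated vertex. Then:
\begin{itemize}
\item If a $K_1$ is present, all $x_i,y_i$ are always-forced, and only the small exceptional component $C$ (of at most $5$ or $6$ vertices) together with the universal vertex $u$ remains to be analysed. Using (2) and (3), I will determine which vertices of $C\cup\{u\}$ are always-forced, and which pairs are left undistinguished by the always-forced set. I will then show there are at least two different minimum completions whose intersection contains only always-forced vertices. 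For example, if $C=K_3$, the completions are symmetric under the automorphisms of $K_3$.
\item If no $K_1$ is present, the $K_2$ pairs are not individually forced. The codes missing exactly one vertex among $X\cup Y$ can then be rotated by the symmetry $x_i\leftrightarrow x_j$ and $x_i\leftrightarrow y_i$. This shows that no $x_i$ or $y_i$ is min-forced unless it is always-forced, and the remaining part $C$ is handled as before.
\end{itemize}

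\emph{Expected obstacle.} The main difficulty is the completeness and correctness of the enumeration in the structural step, together with the finite but fiddly verification, for each exceptional component, that every min-forced vertex is always-forced. I would first settle the enumeration rigorously via the excess count and open-twin-freeness. I would then do the small-component checks by hand, exploiting automorphisms of $H$ to produce alternative minimum codes.
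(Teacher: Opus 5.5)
Your strategy matches the paper's. You pass to $H=\overline{G}$, note that $G$ is closed-twin-free iff $H$ has no open twins, classify the admissible $H$ with $|E(H)|\le k+1$, and then use Lemma~\ref{lemma:XY} on the $K_2$-components to find the minimum ID codes in each case. Your component-excess count is a valid, and tidier, substitute for the paper's degree-sequence enumeration.

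\textbf{Gap 1: the classification you wrote down is wrong.} Apply your own bounds:
\begin{itemize}
\item an isolated vertex gives $-1/2$ and occurs at most once, and $K_2$ gives $0$;
\item every other admissible component gives at least $1$, with equality only for $P_4$;
\item excess $3/2$ occurs only for $K_3$ and $P_5$, since $P_5$ is the only $5$-vertex tree without two leaves on a common vertex.
\end{itemize}
Together with integrality of $|E(H)|-k\in\{0,1\}$, this leaves exactly four cases: $kK_2$, $P_4\cup(k-2)K_2$, $K_1\cup K_3\cup(k-2)K_2$ and $K_1\cup P_5\cup(k-3)K_2$. Your items do not say this. Item (a) contains an impossible configuration: $(k-1)K_2\cup K_1$ plus a further component either exceeds $2k$ vertices or has a second isolated vertex. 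Item (b) lists shapes your own count excludes: $C_5$ has excess $5/2$, and a $4$-vertex component with $4$ edges has excess $2$. Also, the bull has excess $5/2$, not $3/2$.

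\textbf{Gap 2: none of the four verifications is done, and one is not a symmetry argument.} Take the case $P_4\cup(k-2)K_2$ with $P_4=v_1v_2v_3v_4$. Your rotation of codes missing one vertex of $X\cup Y$ assumes such a code is \emph{minimum}, i.e.\ that $\gamma^{ID}(G)=2k-1$. The ingredients you have only give $|S|\ge 2k-2$:
\begin{itemize}
\item Lemma~\ref{lemma:XY}(i) on $X\cup Y$;
\item the always-forced endpoints $v_1,v_4$ (forced by the pairs $v_2,v_4$ and $v_1,v_3$);
\item the need for one of $v_2,v_3$.
\end{itemize}
To exclude $|S|=2k-2$ you need one more observation. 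If, say, $x_1\notin S$ and exactly one of $v_2,v_3$ is in $S$, then $I(S;v_4)=S=I(S;y_1)$ or $I(S;v_1)=S=I(S;y_1)$. Only after that does checking that $V(G)\setminus\{w\}$ is an ID code for every $w\notin\{v_1,v_4\}$ finish the case.

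The other three cases are routine but still have to be written:
\begin{itemize}
\item For $kK_2$: $\gamma^{ID}(G)=2k-1$, and every $V(G)\setminus\{v\}$ is an ID code.
\item For $K_1\cup K_3\cup(k-2)K_2$: two triangle vertices are required, and each $V(G)\setminus\{u,v_i\}$ is a minimum code.
\item For $K_1\cup P_5\cup(k-3)K_2$: $v_2,v_4$ are always-forced by the pairs $v_3,v_5$ and $v_1,v_3$. The symmetric differences $\{v_1,v_5\}$, $\{v_1,v_3\}$, $\{v_3,v_5\}$ force two of $v_1,v_3,v_5$. The sets $V(G)\setminus\{u,v_i\}$ for $i\in\{1,3,5\}$ are minimum codes.
\end{itemize}
With the list corrected and these checks supplied, your argument coincides with the paper's.
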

\begin{proof} Let us first make three useful observations for the complement of a closed-twin-free graph $G$:

\begin{observation}\label{obs:opentwins}
    There can be no open twins in $\overline{G}$ as they would be closed twins in $G$, making $G$ not closed-twin-free.
    In particular, there cannot be pendant vertices attached to the same parent vertex.
    As a special case, $\overline{G}$ cannot have the graph $P_3$ as a connected component.
\end{observation}

\begin{observation}\label{obs:criterion} Let $v$ be a vertex in $\overline{G}$ such that $d_{\overline{G}}(v)=\Delta(\overline{G}).$

    If \[\Delta(\overline{G}) \ge \abs{\{u \in V(\overline{G})\setminus\{v\} \sep d_{\overline{G}}(u) > 1\}}+2,\] then $G$ is not closed-twin-free. This is because a highest-degree vertex $v$ in $\overline{G}$ must have at least two degree neighbours with degree one, which makes them pendant vertices with the same parent $v$ (see Observation~\ref{obs:opentwins}).
\end{observation}

\begin{observation}\label{obs:univ-vertex}
    We notice that there cannot be more than one vertex with degree $0$ in the complement of $G$.
    Such a vertex is an universal vertex in $G$ and two or more of them would be closed twins.
\end{observation}
Observation~\ref{obs:univ-vertex} immediately rules out graphs with $\binom{n}{2}-(k-1)$ or more edges. Indeed, we have $\sum_{w\in V(\overline{G})}d_{\overline{G}}(w) = 2|E(\overline{G})|\le 2k-2$
by the handshaking lemma, and since the order of $\overline{G}$ equals $2k$, we have at least two vertices with degree $0$ in $\overline{G}$.

Let us look at graphs with $\binom{n}{2}-k$ edges.
In this case, possible degree sequences (that do not violate Observation~\ref{obs:univ-vertex}) for $\overline{G}$ in ascending order of degree are $(1,1,1,\dots,1,1,1)$ and $(0,1,1,\dots,1,1,2)$.
The latter degree sequence meets the criterion of Observation~\ref{obs:criterion}, which implies that $\overline{G}$ is not closed-twin-free.
The former sequence is realized only by the graph $G=K_{2k}$ with a perfect matching removed.
By Lemma~\ref{lemma:XY}, 
the graph has identifying number at least $2k-1$ and it is well-known (and easy to check) that its ID codes of the smallest size are $V(G) \setminus \{v\}$ for any $v \in V(G)$. Hence, there are no proper-min-forced vertices.

Now, let us consider graphs with $\binom{n}{2}-(k+1)$ edges.
Let us assume first that there are no universal vertices in $G$, in other words, that the smallest degree in $\overline{G}$ is $1$.
The possible degree sequences for $\overline{G}$ are $(1,\dots,1,1,2,2)$ and $(1,\dots,1,1,3)$, the latter of which meets the criterion of Observation~\ref{obs:criterion}, which implies that $\overline{G}$ is not closed-twin-free.
The degree sequence $(1,\dots,1,1,2,2)$ can only be realized in two nonisomorphic ways,  because the vertices of degree two can either be neighbours or not, namely, by the graphs  $P_4 \cup \bigcup_{i=1}^{k-2} K_2 $ and $P_3 \cup P_3 \cup \bigcup_{i=1}^{k-3} K_2 $.
The second one is not open-twin-free by Observation~\ref{obs:opentwins}.

Let us look at the first one $\overline{G} =  P_4 \cup \bigcup_{i=1}^{k-2} K_2 $, and let us name the vertices of $P_4$ naturally with $v_1$, $v_2$, $v_3$ and $v_4$. We begin by showing that $\gamma^{ID}(G)=2k-1$. 
Now, since $N_G[v_2]\, \triangle \, N_G[v_4] = \{v_1\}$ and $N_G[v_1]\, \triangle \, N_G[v_3] = \{v_4\}$, the vertices $v_1$ and $v_4$ are always-forced. 
By Lemma~\ref{lemma:XY} (and using its notation of the $k-2$ disjoint unions of $K_2$ in $\overline{G}$), at least $2(k-2)-1$ (all but one) of the vertices of type $x_i$ and $y_i$ are in an identifying code $S$.
Let us call the vertices $S_{XY} = S \cap (X \cup Y)$.
Now any identifying code $S$ in $G$ must have the set $S_{XY} \cup \{v_1, v_4\}$ as a subset.
The set $S_{XY} \cup \{v_1, v_4\}$ itself is not an identifying code due to the fact that 
 $I_G(v_1) = I_G(v_4) = S_{XY} \cup \{v_1, v_4\}$.
Since $N_G[v_1]\, \triangle \, N_G[v_4] = \{v_2, v_3\}$, at least one of the vertices $v_2$ and $v_3$ must be in $S$. Therefore, if $\abs{S_{XY}}=2k-4$, then $\abs{S} \geq 2k - 1$.
Hence, we may assume that $\abs{S_{XY}}=2k-5$, and without loss of generality, let $S_{XY} = (X \cup Y) \setminus \{x_1\}$.
If now $\abs{\{v_2, v_3\}\cap S}=1$, and say, without loss of generality, that $v_2 \in S$ and $v_3 \notin S$, then $I_G(S; v_4) = S_{XY} \cup \{v_1, v_2, v_4\} = S = I_G(S; y_1)$.
It follows that $\gamma^{ID}(G) \geq 2k-1$, and it is straightforward to verify that the sets $V(G) \setminus \{w\}$, where $w \notin \{v_1, v_4\}$, are ID codes for any choice of the vertex $w$.
This shows that there are no proper-min-forced vertices in $G$.

Now we allow one universal vertex in a graph with $\binom{n}{2}-(k+1)$ edges.
Possible degree sequences that have exactly one degree-0 vertex are $(0,1,\dots,1,2,2,2)$, $(0,1,\dots,1,1,2,3)$ and $(0,1,\dots,1,1,1,4)$.

Moreover, only the first sequence is possible since the two latter ones lead to graphs with closed-twins by Observation~\ref{obs:criterion}.
All possible ways to realize the valid degree sequence are the following:
\begin{itemize}
    \item $C_3 \cup K_1 \cup \bigcup_{i=1}^{k-2} K_2 $,
    \item $P_5 \cup K_1 \cup \bigcup_{i=1}^{k-3} K_2 $,
    \item $P_4 \cup P_3 \cup K_1 \cup \bigcup_{i=1}^{k-4} K_2 $,
    \item $P_3 \cup P_3 \cup P_3 \cup K_1 \cup \bigcup_{i=1}^{k-5} K_2. $  
\end{itemize}
The last two graphs have $P_3$ as a connected component and by Observation~\ref{obs:opentwins}, their complements are not closed-twin-free. We consider the other two graphs next.

Let us first look at an identifying code $S$ in the graph $G$ such that $\overline{G} = C_3 \cup K_1 \cup \bigcup_{i=1}^{k-2} K_2$.
We name the vertices in $C_3$ with $v_1$, $v_2$ and $v_3$.
We notice that $N_G[v_i]\, \triangle \, N_G[v_j] = \{v_i, v_j\}$ for $i, j \in \{1,2,3\}$.
To distinguish the vertices $v_i$ and $v_j$, we must have at least one of them in an identifying code. 
It follows that $\abs{\{v_1, v_2, v_3\} \cap S} \geq 2$.
Furthermore, by Lemma~\ref{lemma:XY}(ii) (with the usual notation for the disjoint unions of $K_2$), it must be that $X\cup Y \subseteq S$; in other words, all vertices of type $x_i$ and $y_i$ are always-forced.
Therefore, $\gamma^{ID}(G) \geq 2k -2$.
The codes $V(G) \setminus \{u, v_i\}$, where $u$ is the universal vertex and $v_i$ is one vertex in the cycle, are ID codes in $G$. By symmetry, it suffices to show this for the set $S=V(G) \setminus \{u, v_3\} = X \cup Y \cup \{v_1, v_2\}$ and, indeed, the $I$-sets are nonempty and distinct: $I_G(S; v_1) = S \setminus \{v_2\}$, $I_G(S; v_2) = S \setminus \{v_1\}$, $I_G(S; v_3) = S \setminus \{v_1, v_2\}$ and  $I_G(S; u) = S$ as well as $I_G(S; x_i) = S \setminus \{y_i\}$ and $I_G(S; y_i) = S \setminus \{x_i\}$ for all $i \in \{1, \dots, k-2\}$. In conclusion, the vertices $x_i$ and $y_i$ are always-forced, and the vertices $u$, $v_1$, $v_2$ and $v_3$ are not proper-min-forced.

Finally, let us look at an identifying code $S$ in the graph $G$ such that $\overline{G} = P_5 \cup K_1 \cup \bigcup_{i=1}^{k-3} K_2 $ and let us name the vertices of $P_5$ in order with $v_1, \dots, v_5$.
Again, the $2k - 6$ vertices in the set $X \cup Y$ are always-forced by Lemma~\ref{lemma:XY}(ii).
The vertices $v_2$ and $v_4$ are always-forced by the forcing pairs $v_3, v_5$ and $v_1, v_3$, respectively.
From the pairwise symmetric differences of $N_G[v_2]\, \triangle \, N_G[v_4]=\{v_1,v_5\}$, $N_G[u]\, \triangle \, N_G[v_2]=\{v_1,v_3\}$ and $N_G[u]\, \triangle \, N_G[v_4]=\{v_3,v_5\}$, we get that at least two of the vertices $v_1$, $v_3$ and $v_5$ must be in every identifying code of $G$.
It follows that $\gamma^{ID}(G) \geq 2k - 6 + 2 + 2 = 2k - 2$.
Let us now show that the sets $S_1 = V(G) \setminus \{v_1, u\}$, $S_3 = V(G) \setminus \{v_3, u\}$ and $S_5 = V(G) \setminus \{v_5, u\}$ are minimum identifying codes in $G$. Let us start with the $I$-sets of  $S_3$: $I(S_3; u) = S_3$, $I(S_3; v_1) = S_3 \setminus \{v_2\}$, $I(S_3; v_2) = S_3 \setminus \{v_1\}$, $I(S_3; v_3) = S_3 \setminus \{v_2, v_4\}$, $I(S_3; v_4) = S_3 \setminus \{v_5\}$ and $I(S_3; v_5) = S_3 \setminus \{v_4\}$ as well as  $I(S_3; x_i) = S_3 \setminus \{y_i\}$ and  $I(S_3; y_i) = S_3 \setminus \{x_i\}$ for all $i \in \{1, \dots, k-3\}$. The two remaining cases where either of the endpoints of $P_5$ is not in the code are symmetrical, so let us check the $I$-sets of $S_5$: $I(S_5; u) = S_5$, $I(S_5; v_1) = S_5 \setminus \{v_2\}$, $I(S_5; v_2) = S_5 \setminus \{v_1, v_3\}$, $I(S_5; v_3) = S_5 \setminus \{v_2, v_4\}$, $I(S_5; v_4) = S_5 \setminus \{v_3\}$ and $I(S_5; v_5) = S_5 \setminus \{v_4\}$ as well as $I(S_5; x_i) = S_5 \setminus \{y_i\}$ and $I(S_5; y_i) = S_5 \setminus \{x_i\}$ for all $i \in \{1, \dots, k-3\}$. Consequently, there are no proper-min-forced vertices in $G$. The assertion of the theorem now follows.
 \end{proof}

\section{On computational complexity}\label{CompSec}

We saw in Lemma~\ref{Connection} that determining whether a vertex is always-forced is (computationally) easy. However, this is not the case for proper-min-forced vertices. In what follows, we show that deciding whether a given vertex in a graph is proper-min-forced is co-NP-hard. The proof is based on a polynomial-time reduction from the well-known $3$-satisfiability ($3$-SAT) problem. For this purpose, we first introduce some notation related to the $3$-SAT problem.

Let $X = \{x_1, \dots, x_n\}$ be the set of variables and $\literals = \{x_1, \dots, x_n, \overline{x}_1, \dots, \overline{x}_n\}$ the set of literals, where $\overline{x}$ denotes the negation of $x$.
Let $\instance = \{ \clause_1, \clause_2, \cdots, \clause_m\}$ be an instance of the 3-SAT problem  over $X$, where each clause $\clause_j$ contains exactly three literals of $\literals$.
An assignment of truth values is a set $A \subseteq U$ where either $x_i \in A$ or $\overline{x}_i \in A$ for each $x_i \in X$; naturally, $x_i \in A$ is interpreted as \emph{true} and $\overline{x}_i \in A$ as \emph{false}.
The logical interpretation of these definitions is that $\instance$ corresponds to the formula $\clause_1 \wedge \clause_2 \wedge \cdots \wedge \clause_m$ and the clause $\clause_j = \{u_{j, 1},u_{j, 2},u_{j, 3}\} \subseteq \literals$ corresponds to $u_{j, 1} \vee u_{j, 2} \vee u_{j, 3}$. The truth assignment $\assignment$ satisfies the instance $\instance$ if each clause $\clause_j$ contains at least one element of $\assignment$, that is, $u_{j, 1} \vee u_{j, 2} \vee u_{j, 3}$ is true under the natural interpretation of the elements of $\assignment$.

\begin{theorem}
    The decision problem of determining whether a given vertex $w$ in a graph $G$ is proper-min-forced is co-NP-hard.
\end{theorem}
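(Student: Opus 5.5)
We plan a polynomial-time reduction from 3-SAT. Given an instance $\instance$ over $X$ with clauses $\clause_1,\dots,\clause_m$, we construct a closed-twin-free connected graph $G_\instance$ and a designated vertex $w$ with two properties. First, $w$ is never always-forced, i.e.\ $V(G_\instance)\setminus\{w\}$ is an identifying code; by Lemma~\ref{Connection} this is checked locally by making sure that no pair $a,b$ satisfies $N[a]\,\triangle\,N[b]=\{w\}$. Second, $w$ is min-forced if and only if $\instance$ is \emph{not} satisfiable. Then $\instance$ is unsatisfiable exactly when $w$ is proper-min-forced, which gives co-NP-hardness.

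The construction follows the usual identifying-code NP-hardness gadgets. For each variable $x_i$ we use a variable gadget, for instance a small path or cycle piece with pendant leaves. By facts like Fact~1 and Fact~2 of Example~\ref{shovelgraph}, this gadget needs at least a fixed number $c$ of codewords, and it has exactly two optimal local choices, one containing a vertex labelled $x_i$ and one containing $\overline{x}_i$. For each clause $\clause_j$ we add a clause vertex $c_j$ adjacent to its three literal vertices, with a private leaf or pair attached so that $c_j$ is separated from its neighbours. Arranged correctly, an ID code of size $nc+(\text{const})$ that avoids $w$ exists iff every $c_j$ is dominated and separated by optimal literal choices, i.e.\ iff a satisfying assignment exists.

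The vertex $w$ sits in a separate "fallback" gadget attached to all clause vertices. Its role is to offer an alternative of the same size: putting $w$ into the code dominates and separates all clause vertices at once, using exactly one extra codeword compared with the "satisfied" configuration. We then calibrate the counts so that:
\begin{enumerate}[(a)]
\item every ID code has at least $K$ vertices;
\item codes of size $K$ avoiding $w$ correspond exactly to satisfying assignments;
\item a code of size $K$ containing $w$ always exists (arbitrary literal choices plus $w$).
\end{enumerate}
Hence $\gamma^{ID}(G_\instance)=K$ always. The vertex $w$ lies in all minimum codes iff no satisfying assignment exists, and $V\setminus\{w\}$ remains an ID code, so $w$ is not always-forced.

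The main obstacle is step (b) together with (a): proving the lower bound $K$ rigorously, and showing that any size-$K$ code avoiding $w$ must pick exactly one literal per variable gadget and cannot "borrow" codewords across gadgets to cover clauses cheaply. I would handle this with a local counting argument: each gadget has a forced lower bound (leaves force codewords, as in Fact~1), so the total slack is zero and every gadget must be exactly tight. I would first try gadgets in which the literal vertices are the only ones able to separate certain internal pairs, mirroring Example~\ref{shovelgraph}. Finally I would verify closed-twin-freeness and the polynomial size of the construction.
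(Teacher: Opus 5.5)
Your overall plan is the paper's: keep $w$ non-always-forced and build the instance so that (a)–(c) hold, so that $w$ is proper-min-forced iff $\instance$ is unsatisfiable. But as written the proposal is a template rather than a proof, and there is a genuine gap. No gadget is actually specified ("a small path or cycle piece", "arranged correctly", "calibrate the counts"). So none of the following is established:
\begin{itemize}
\item (a), (b) and (c);
\item closed-twin-freeness;
\item that $V(G_\instance)\setminus\{w\}$ is an identifying code.
\end{itemize}
You also explicitly defer (a) and (b), which are the heart of the reduction.

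Your description of the fallback gadget is moreover inconsistent. If putting $w$ into the code really uses one extra codeword compared with the satisfied configuration, then (c) fails, and so does the claim $\gamma^{ID}(G_\instance)=K$. The missing device is something that charges the $w$-free configuration the same single codeword. In the paper this is a pendant vertex $v$ attached to $w$. Domination of $v$ forces $S\cap\{v,w\}\neq\emptyset$; a satisfying assignment spends that codeword on $v$, while the fallback spends it on $w$.

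A concrete construction also removes your "main obstacle" at once. The paper uses the following gadgets:
\begin{itemize}
\item for each variable, the $4$-cycle $b_ix_ic_i\overline{x}_i$ with a pendant $a_i$ at $b_i$ and a pendant $d_i$ at $c_i$;
\item for each clause, a vertex $\alpha_j$ with a pendant $\beta_j$, where $\alpha_j$ is adjacent to its three literal vertices and to $w$.
\end{itemize}
Then $V(G)$ is partitioned into the $3n+m+1$ pairs $\{a_i,b_i\}$, $\{c_i,d_i\}$, $\{x_i,\overline{x}_i\}$, $\{\alpha_j,\beta_j\}$ and $\{v,w\}$. Each pair must meet every ID code, either by domination of a pendant or because $N[a_i]\,\triangle\,N[b_i]=\{x_i,\overline{x}_i\}$. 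This gives (a) with $K=3n+m+1$. It also forces every minimum code to contain exactly one vertex of each pair, so no "borrowing" across gadgets is possible. Your stronger requirement of exactly two optimal local choices per variable gadget is therefore unnecessary.

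Since $N[\alpha_j]\,\triangle\,N[\beta_j]=\clause_j\cup\{w\}$, a minimum code avoiding $w$ must contain a literal from every clause, which is (b). Three explicit codes then settle the rest:
\begin{itemize}
\item $\bigcup_i\{x_i,b_i,c_i\}\cup\{\alpha_1,\dots,\alpha_m\}\cup\{w\}$ gives (c);
\item $\assignment\cup\bigcup_i\{b_i,c_i\}\cup\{\alpha_1,\dots,\alpha_m\}\cup\{v\}$, for a satisfying $\assignment$, gives the satisfiable direction;
\item $V(G)\setminus\{w\}$ shows $w$ is not always-forced.
\end{itemize}
A construction of this kind, with these verifications, is what your proposal still needs.
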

\begin{proof}

Let $\instance$ be an instance of the 3-SAT problem with $n$ variables and $m$ clauses.
Define a graph based on $\instance$ as follows.
For each variable $x_i$, we define a variable gadget $H_{x_i}$ with vertices $\{x_i, \overline{x}_i, a_i, b_i, c_i, d_i\}$ and edges $\{a_ib_i, b_ix_i, b_i\overline{x}_i, c_ix_i, c_i\overline{x}_i, c_id_i\}$.
For each clause $\clause_j$, we define a clause gadget $H_{\clause_j}$ with vertices $\{\alpha_j, \beta_j\}$ and a single edge $\alpha_j \beta_j$.
Then we combine these gadgets and two additional vertices $w$ and $v$ into a graph $G = (V(G), E(G))$ by defining (see Figure~\ref{fig:esim-G})
\[V(G) = \bigcup_{i=1}^{n} V(H_{x_i}) \cup \bigcup_{j=1}^{m} V(H_{\clause_j}) \cup \{w, v\}\]
and
\[
\begin{split}
    E(G) = &\bigcup_{i=1}^{n} E(H_{x_i}) \cup \bigcup_{j=1}^{m} E(H_{\clause_j})\cup \{\alpha_ju_{j,k} \sep k \in \{1,2,3\},\ 1\leq j \leq m\} \\ 
    &\cup \{w\alpha_j \sep 1\leq j \leq m\} \cup \{wv\} \text.
\end{split}
\]
The graph $G$ has $6n+2m+2$ vertices and $6n+5m+1$ edges.

\begin{figure}
    \centering
    \begin{tikzpicture}[yscale=1.2, xscale=1.4, rotate=0]

    \renewcommand*{\EdgeLineWidth}{ 0.5pt}
    \SetVertexMath
    \SetVertexLabelOut

    {
    \tikzstyle{VertexStyle}=[circle,draw, fill=white]

    \Vertex[x=-.3, y=2, L=v, Lpos=90]{v}
    \Vertex[x=1, y=2, L=w, Lpos=90]{w}
    \Vertex[x=2.33, y=3, L=\alpha_j, Lpos=90]{a1}
    {
    \tikzstyle{VertexStyle}=[minimum size=1pt,inner sep=0pt, fill=none]
    \Vertex[x=3.5, y=4, L=\ , Lpos=0]{x33}
    \Vertex[x=3.5, y=1.67, L=\ , Lpos=0]{x11}
    \Vertex[x=3.5, y=4, L=\ , Lpos=0]{x3}
    \Vertex[x=3.5, y=1.67, L=\ , Lpos=0]{x1}
    }
    {
    \tikzstyle{VertexStyle}=[diamond, scale=1.3, aspect=1.4, draw, fill=white]
    \EA[unit=0, L=\ ](x11){tim1}
    \EA[unit=0, L=\ ](x33){tim3}
    }
    
    \Vertex[x=4, y=3-0.33, L=x_i, Lpos=90]{x2}

    \Vertex[x=5.3, y=4-0.33, L=b_i, Lpos=90]{a}
    \Vertex[x=6.6, y=3-0.33, L=\overline{x}_i, Lpos=90]{x5}
    \Vertex[x=5.3, y=2-0.33, L=c_i, Lpos=90]{b}
\Vertex[x=5.3+1, y=2-0.5, L=d_i, Lpos=90]{c} 
      \Vertex[x=5.3+1, y=4-0.2, L=a_i, Lpos=90]{d}

    \Vertex[x=1., y=3.5, L=\beta_j, Lpos=90]{bee}

    }
    {
    \tikzstyle{VertexStyle}=[fill=white,circle]
    \SetVertexLabelIn

    \Vertex[x=2.33, y=1.5, L=\dots]{dots}

    }

    \tikzstyle{EdgeStyle}=[bend right=0]
    \Edge(v)(w)    
    \Edge(w)(a1)
    \Edge(w)(dots)    
    \Edge(a)(d) 
    \Edge(b)(c)

    \Edge(a1)(x2)
    \Edge(a1)(tim1)
    \Edge(a1)(tim3)
    
    \Edge(a1)(bee)
    
    \Edge(x2)(a)
    \Edge(x2)(b)
    \Edge(a)(x5)
    \Edge(b)(x5)

\end{tikzpicture}
    \caption{The vertex $w$ is connected to all vertices $\alpha_j$.
    The vertex $\alpha_j$ is connected to three variable gadgets, two of which are drawn as boxes for the sake of simplicity.}
    \label{fig:esim-G}
\end{figure}

In what follows, we first show that $\gamma^{ID}(G) = 3n + m+ 1$. To prove the lower bound on $\gamma^{ID}(G)$, let $S$ be an ID code of $G$.
We can make the following observations for all $i \in \{1, 2, \ldots, n\}$ and $j \in \{1, 2, \ldots, m\}$:
\begin{enumerate}
    \item $a_i \in S$ or $b_i \in S$ (as $I(S, a_i) \neq \emptyset$),
    \item symmetrically, $c_i \in S$ or $d_i \in S$ (as $I(S, d_i) \neq \emptyset$),
    \item $x_i \in S$ or $\overline{x}_i \in S$ since $I(S;a_i) \neq I(S;b_i)$, and
    \item $\alpha_j \in S$ or $\beta_j \in S$ (as $I(S, \beta_j) \neq \emptyset$).
\end{enumerate}
By Observations 1--3, we get $\abs{S \cap V(H_{x_i})} \geq 3$ for all $i$ and by Observation 4, we get $\abs{S \cap V(H_{\clause_j})} \geq 1$ for all $j$.
Finally, $v \in S$ or $w \in S$ in order to dominate the vertex $v$; hence, $\abs{S\cap \{w,v\}}\geq 1$.
Thus, in conclusion, we get $\abs{S}\geq \gamma^{ID}(G) \geq 3n+m+1$.

Next, we will show that  $S' = \bigcup_{i=1}^{n} \{x_i, b_i, c_i\} \cup \bigcup_{j=1}^{m} \{\alpha_j\} \cup \{w\}$ is an ID code in $G$.
The $I$-sets are as follows: $I(S'; a_i) = \{b_i\}$,      $I(S'; b_i) = \{b_i, x_i\}$, $I(S'; c_i) = \{c_i, x_i\}$, $I(S'; d_i) = \{c_i\}$,
$I(S'; x_i) = \{b_i, c_i, x_i\} \cup \{\alpha_j \sep j: x_i \in \clause_j\}$,
$I(S'; \overline{x}_i) = \{b_i, c_i\} \cup \{\alpha_j \sep j: \overline{x}_i \in \clause_j\}$,
$I(S'; \alpha_j) = \{\alpha_j, w\} \cup ( \clause_j \cap X)$,
    $I(S'; \beta_j) = \{\alpha_j\}$,
    $I(S'; w) = \{w\} \cup \{\alpha_j \sep j\}$ and
     $I(S'; v) = \{w\}$.Subsequently, as $S'$ has $3n+m+1$ codewords, we obtain $\gamma^{ID}(G) = 3n + m+ 1$.

In what follows, we show that the vertex $w$ is proper-min-forced if and only if the 3-SAT instance $\instance$ is not satisfiable.
First, we observe that $w$ is not always-forced by showing that $V(G) \setminus \{w\}$ is an identifying code.
We can make use of the $I$-sets listed above, since $(S' \setminus \{w\}) \subseteq V(G) \setminus \{w\}$.
In particular, for $u \in T= \{a_i, b_i, c_i, d_i, x_i, \overline{x}_i, \beta_j\}$, we have $I(S'; u) \subseteq I(V(G)\setminus \{w\}; u)$. Hence, the $I$-sets $I(V(G)\setminus \{w\}; u)$ remain nonempty and distinct (among vertices in $T$) by the reasoning above.
The $I$-sets that need additional scrutiny are the following:
$I(V(G) \setminus \{w\}; \alpha_j) = \{\alpha_j, \beta_j\} \cup \clause_j$,
$I(V(G) \setminus \{w\}; w) = \{v\} \cup \{\alpha_j \sep j\}$ and $I(V(G) \setminus \{w\}; v) = \{v\}$.
It is easy to check that all $I$-sets are unique and nonempty. Therefore, $V(G) \setminus \{w\}$ is an ID code in $G$ and the vertex $w$ is not always-forced. Thus, what remains to be shown is that $w$ is min-forced if and only if the 3-SAT instance $\instance$ is not satisfiable.

($\Rightarrow$) Suppose to the contrary that $\instance$ is satisfiable and $\assignment$ is a valid truth assignment. In what follows, we show that $S_\assignment = \assignment \cup \bigcup_{i=1}^{n} \{ b_i, c_i\} \cup \bigcup_{j=1}^{m} \{\alpha_j\} \cup\{v\}$, with $3n+m+1 = \gamma^{ID}(G)$ vertices, is a minimum ID code in $G$.
Again, it is easy to verify that the $I$-sets with respect to $S_\assignment$ are nonempty and different:
$I(S_\assignment; a_i) = \{b_i\}$, $I(S_\assignment; b_i) = \{b_i\} \cup (\{x_i, \overline{x}_i\} \cap \assignment)$, $I(S_\assignment; c_i) = \{c_i\} \cup (\{x_i, \overline{x}_i\} \cap \assignment)$, $I(S_\assignment; d_i) = \{c_i\}$, $I(S_\assignment; x_i) = \{b_i, c_i\} \cup \{\alpha_j \sep j: x_i \in \clause_j\}\cup (\{x_i\} \cap \assignment)$,  $I(S_\assignment; \overline{x}_i) = \{b_i, c_i\}\cup \{\alpha_j \sep j: \overline{x}_i \in \clause_j\}\cup (\{\overline{x}_i\} \cap \assignment)$, $I(S_\assignment; \alpha_j) = \{\alpha_j\} \cup (\clause_j \cap \assignment)$, $I(S_\assignment; \beta_j) = \{\alpha_j\}$, $I(S_\assignment; w) = \{v\} \cup \{\alpha_j \sep j\}$ and $I(S_\assignment; v) = \{v\}$.
In particular, notice that $I(S_\assignment; x_i) \neq I(S_\assignment; \overline{x}_i)$ as $(\{x_i\} \cap \assignment) \neq (\{\overline{x}_i\} \cap \assignment)$ and $I(S_\assignment; \alpha_j) \neq I(S_\assignment; \beta_j)$ since $\clause_j \cap \assignment \neq \emptyset$ due to $A$ being a truth assignment that satisfies $\instance$.
Hence, the vertex $w$ is not min-forced and a contradiction follows.

($\Leftarrow$) Suppose to the contrary that $w$ in not min-forced, \textit{i.e.}, there exist a minimum ID code $S$ in $G$ such that $w \notin S$. Next, we ``extract'' a valid truth assignment from $S$. The set
$\bigcup_{i=1}^{n}\{x_i, \overline{x}_i\}\cap S$ contains exactly one of $x_i$ or $\overline{x}_i$ for all $i$ because of Observation 3 and $|S| = \gamma^{ID}(G) = 3n + m+ 1$.
This allows us to ``interpret'' it as a truth assignment of the $n$ variables like this: if $x_i \in S$, then we set the $i$th variable as true, and if  $\overline{x}_i \in S$, we set it as false.
Now we argue that this is a satisfying truth assignment.
Since $S$ is an identifying code, it must be that $I(S;\alpha_j) \neq I(S;\beta_j)$.
Since $N[\alpha_j] \,\triangle \, N[\beta_j] = \{u_{j,1}, u_{j,2}, u_{j,3}, w\}$ and we assumed that $w \notin S$, it must be that at least one of $u_{j,k} \in \clause_j$ is in $S$.
The vertex that distinguishes $\alpha_j$ from $\beta_j$ corresponds precisely to the true literal that satisfies clause $\clause_j$. 
This holds for all pairs $\alpha_j, \beta_j$, therefore there is a true literal in every clause $\clause_j$ and it follows that $\instance$ is satisfiable.
This concludes the other direction of the claim: if $w$ is not proper-min-forced, then $\instance$ is satisfiable. 
\end{proof}

\section{Conclusions}
In this paper, we studied the vertices that have to be in every minimum identifying code. As the always-forced vertices in a graph are rather easy to handle, we focused on the more complicated question of the proper-min-forced vertices. While the number of always-forced vertices can reach the trivial upper bound $\gamma^{ID}(G)\le n-1$, we showed that for  proper-min-forced vertices there is the absolute boundary $2n/3$ on their cardinality. We also gave an infinite family of graphs with $2n/3-1$ such vertices.  It is tempting to argue that studying the structure of  colour graphs more carefully  could show the upper bound $2n/3-1$, but we believe that it might not be enough and new ideas are needed if the optimal upper bound is actually $2n/3-1$. For future work, it would be interesting to know whether $2n/3-1$ or $2n/3$ is the optimal bound.

We also examined the maximum number of edges in a graph that contains proper-min-forced vertices. We gave an upper bound for graphs of even order  and showed that in can be attained.
Finally, we considered the algorithmic complexity of deciding whether a vertex is proper-min-forced or not. We showed that this problem is co-NP-hard. Recall that deciding whether a vertex is always-forced is algorithmically easy.

\medskip

    \textbf{Acknowledgments:}
 The authors are partially supported by the Research Council of Finland  grants 338797 and 358718.


\end{document}